\providecommand*{\input@path}{}
\edef\input@path{{images/}\input@path}
\newcommand{\R}{\mathbb{R}} 
\newcommand{\C}{\mathbb{C}} 
\newcommand{\Z}{\mathbb{Z}} 
\newcommand{\norme}[1]{\left\Vert #1\right\Vert}
\newtheorem{lem}{Lemma}
\newtheorem{teo}{Theorem}
\newtheorem*{teo*}{Theorem}
\newtheorem{prop}{Proposition}
\theoremstyle{definition}
\newtheorem{defi}{Definition}
\newtheorem{exmp}{Example}
\theoremstyle{remark}
\newtheorem{rem}{Remark}
\DeclareMathOperator{\sn}{sn}
\DeclareMathOperator{\dn}{dn}
\DeclareMathOperator{\cn}{cn}
\DeclareMathOperator{\ch}{ch}
\DeclareMathOperator{\Span}{span}
\DeclareMathOperator{\supp}{supp}
\title{Boundary correlations for the $Z$-invariant Ising Model}
\author{Tristan Pham-Mariotti}
\begin{document}

\maketitle

\begin{abstract}
Consider the natural graph associated to a rhombus tiling of a polygonal region
in the plane. The spin correlations between boundary vertices of this graph in
the $Z$-invariant Ising model do not depend on the choice of the rhombus tiling
but only on the region. We provide a matrix formula depending on the region
which allows practical computations of boundary correlations in this setting,
extending the results of Galashin in the critical case.
\end{abstract}

\section{Introduction}

Let $R$ be a polygonal region in the plane and let $\mathbb{T}$ be a rhombus
tiling of $R$ such as in Figure~\ref{region_1}. There is a natural graph
$G_{\mathbb{T}}=(V,E)$ assigned to it, whose edges are diagonals of rhombi, see
Figure~\ref{regions_graph}. The \emph{Ising model} on this graph is defined as
follows. A \emph{spin configuration} $\sigma$ of $G_{\mathbb{T}}$ is a function
of the vertices of $G_{\mathbb{T}}$ with values in $\{-1,1\}$. The probability
of a spin configuration $\sigma$ is given by the \emph{Ising Boltzmann measure}
\[ \mathbb{P}(\sigma):=\frac{1}{Z}\prod_{e=xy\in E}\exp(j_{xy}\sigma_x\sigma_y) \]
where
\[ Z := \sum_{\sigma\in \{-1,1\}^{V}}\prod_{e=xy \in E}\exp(j_{xy}
\sigma_u\sigma_v), \]
is the \emph{Ising partition function} and $(j_e)_{e\in E}$ are positive real
numbers, called \emph{coupling constants}. Let $n\geq 1$ and let $b_1,\dotsc,
b_n$ be the vertices of $G_{\mathbb{T}}$ on the boundary of $R$ listed in
counterclockwise order. In this article, we are interested in computing the
\emph{boundary spin correlations} between $b_i$ and $b_j$, denoted by $\langle
\sigma_{b_i}\sigma_{b_j}\rangle$, for all $i,j$ in $[n]$ where $[n]$ is the
notation for $\{1,\dotsc,n\}$. It is defined by the formula:
\[
\langle \sigma_{b_i}\sigma_{b_j} \rangle := 
\sum_{\sigma\in \{-1,1\}^{V}}\mathbb{P}(\sigma)\sigma_{b_i}\sigma_{b_j}.
\]

The $Z$\emph{-invariant Ising model} was introduced by Baxter in~\cite{baxter}
and consists in choosing coupling constants such that the boundary spin
correlations depend only on the region $R$ and not on the tiling $\mathbb{T}$.
This constraint yields a set of equations called the \emph{Yang-Baxter
equations}, see Section~\ref{section_invariant} for more details. A solution to
these equations can be parametrized by rhombus angles $(\theta_e)_{e\in E}$
naturally assigned to edges of $G_{\mathbb{T}}$ and by an \emph{elliptic
modulus} $k\in\C$ such that $k^2\in (-\infty,1)$ as follows:

\begin{equation*} j_e := \frac{1}{2}\ln\left(\frac{1+\mathsf{sn}(\theta_e|k)}{\mathsf{cn}(\theta_e|k)}\right),
\end{equation*}
where $\mathsf{sn}$ and $\mathsf{cn}$ are rescaled versions of the classical
\emph{Jacobi elliptic functions} $\sn$ and $\dn$ (see
\cite[Chapter~16]{abramowitz1965handbook},~\cite{lawden} and the appendix for
more details on these functions) defined by, for all $t$ in $\C$, 
\begin{equation} \mathsf{sn}(t)=\sn\left(\frac{2K(k)}{\pi}t\Bigl|\Bigr.k\right) \quad \text{ and }\quad  \mathsf{cn}(t)=\cn\left(\frac{2K(k)}{\pi}t\Bigl|\Bigr.k\right)
\label{rescaled}
\end{equation}
where $K(k)=\int_{0}^{\frac{\pi}{2}}\frac{dx}{\sqrt{1-k^2\sin^2(x)}}$ is the
\emph{complete elliptic integral of the first kind}.

When $k=0$, the functions $\mathsf{sn}$ and $\mathsf{cn}$ become the usual
trigonometric functions $\sin$ and $\cos$. The model then corresponds to the
\emph{critical} $Z$-invariant Ising model, whose criticality is proved in
\cite{li2012critical},~\cite{cimasoni2013critical} and \cite{lis2014phase}. In
the critical case, Galashin~\cite{galashin2022formula} proves a formula,
depending only on $R$, for the boundary spin correlations. Our main contribution
is a generalization of this result to the whole $Z$-invariant regime.


\begin{figure}[ht!]
\centering
\def\svgwidth{9cm}
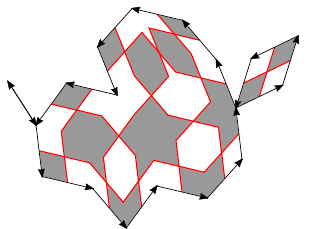
\caption{A region with a rhombus tiling (in dotted lines) and the traintracks (in red) associated to it}
\label{region_1}
\end{figure}

\begin{figure}[ht!]
\centering
\def\svgwidth{9cm}
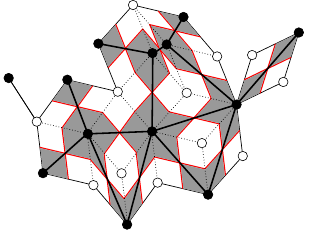
\caption{Graph associated to a region and a traintrack arrangement.}
\label{regions_graph}
\end{figure}


Let us fix the setting to formulate the statement of the main theorem. Let $R$
be a polygonal region whose boundary is a closed polygonal chain composed of
$2n$ unit vectors $v_1,\dotsc,v_{2n}$ listed in counterclockwise order. Let
$\mathbb{T}$ be a rhombus tiling of $R$ and let $\mathscr{T}$ be the set of
lines (called \emph{traintracks}, see~\cite{kenyon2005rhombic}) which are
obtained by connecting the midpoints of the opposite edges of each rhombus. For
all $j\in [n]$, the midpoint of the unit vector $v_j$ is linked to the midpoint
of a unit vector $v_k$ for some $k\in[n]$  by one of these lines. These links
can be recorded with a fixed-point-free involution $\tau:=\tau_R$ of $[2n]$
defined as follows:
for all $j\in [2n]$, let $k\in[2n]$ is the unique index such that the midpoint
of $v_j$ is linked to the midpoint of $v_k$. We then set $\tau(j)=k$ and
$\tau(k)=j$. Note that we have $v_j=-v_k$ when $\tau(j)=k$. Furthermore
$\tau$ depends only on the region $R$ and not on the rhombus tiling.

For technical reasons, we first consider regions for which there are no integers
$1\leq i<j<k<l\leq 2n$ such that $v_i=-v_j=v_k=-v_l$. Such regions are said to
be \emph{non-alternating}. The case of alternating regions is considered in
Section~\ref{alternating_regions}. We choose angles
$\alpha_1,\dotsc,\alpha_{2n}$ in a particular way, and set, for all $j$ in
$[2n]$, $v_j:=e^{i2\alpha_j}$, see Definition~\ref{tau_shape} for a precise
definition of these angles.

Now we define the function from which spin boundary correlations can be
computed. Let $k\in\C$ such that $k ^2\in (-\infty,1)$ and let
$k':=\sqrt{1-k^2}$. To a region $R$, associate a function
$\gamma_R:\R\rightarrow \R^{2n}$ with coordinates
$\gamma_R(t) = (\gamma_1(t),\ldots,\gamma_{2n}(t))$ where
\begin{equation}
 \forall t\in\R, \quad \forall p \in [2n], \quad \gamma_p(t)=(-1)^{|J_p\cap[p]|}\Pi_p(t)\prod_{j\in J_p}\mathsf{sn}(t-\alpha_j)
\label{gamma_1}
\end{equation}
with
\begin{equation}
\forall p \in  [2n], \quad J_p := \{j\in[2n] : (p,j,\tau(j)) \text{ are in cyclic order}\},
\label{Jp}
\end{equation}
\[ \Pi_p(t) := \prod_{j=1}^{2\lfloor\frac{p}{2}\rfloor} \frac{\mathsf{dn}(t-\alpha_j)}{\sqrt{k'}}, \]
and $\mathsf{dn}$ is a rescaled version of the Jacobi elliptic function $\dn$ as
in Equation~\eqref{rescaled}.

%
%

As explained earlier, the spin boundary correlations of the $Z$-invariant Ising
model depend only on the region $R$. Thus denote by $M_R$ the \emph{boundary
correlations matrix} defined by, for all $i,j$ in $[n]$, $(M_R)_{ij}=\langle
\sigma_{b_i}\sigma_{b_j} \rangle$. Following Galashin and
Pylyavskyy~\cite{galashin2020ising}, we introduce a modified version of $M_R$.
Let $\tilde{M}_R=(\tilde{m}_{i,j})$ be the $n\times 2n$ matrix defined in the
following way: for $i=j$, set $\tilde{m}_{i,2i-1}=\tilde{m}_{i,2i}=m_{i,i}=1$.
For $i\neq j$, set
\[ \tilde{m}_{i,2j-1}=-\tilde{m}_{i,2j}=(-1)^{i+j+\mathbbm{1}_{\{i<j\}}}m_{i,j}. \] 

\begin{exmp}
For instance, for $n=3$ the matrix $\tilde{M}$ is equal to
\[ \begin{pmatrix} 1 & 1 & m_{12} & -m_{12} & -m_{13} & m_{13} \\
- m_{12} & m_{12} & 1 & 1 & m_{23} & -m_{23} \\
 m_{13} & -m_{13} & -m_{23} & m_{23} & 1 & 1 \end{pmatrix}. \]
\end{exmp}

The row spawn of $\tilde{M}_R$ is called the \emph{doubling map} and is denoted
by $\phi(M_R)$. We can now state the main result of this paper that expresses
the doubling map in terms of the function $\gamma_R$ introduced above. We write
$\Span(\gamma_R)$ for the linear subspace of $\R^{2n}$ containing finite linear
combinations of elements $\gamma_R(t)$ where $t\in\R$.

\begin{teo}
Let $R$ be a non-alternating region composed of $2n$ unit vectors. Then
$\Span(\gamma_R)$ is of dimension $n$ and $\Span(\gamma_R)=\phi(M_R)$.
\label{main_teo}
\end{teo}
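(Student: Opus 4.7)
The plan follows the strategy of Galashin in the critical case, adapted to the elliptic setting by replacing trigonometric functions with their Jacobi-elliptic analogues. The argument has three ingredients: $\dim\phi(M_R)=n$, the inclusion $\Span(\gamma_R)\subseteq\phi(M_R)$, and $\dim\Span(\gamma_R)\geq n$, which together give the theorem. Both $\Span(\gamma_R)$ and $\phi(M_R)$ are tiling-independent --- the former by construction (only $\tau$ and the angles $\alpha_j$ appear), the latter via $Z$-invariance --- so I may fix any convenient rhombus tiling of $R$ throughout the argument.

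Since $\tilde{M}_R$ is $n\times 2n$, its row span has dimension at most $n$. Summing columns $2j-1$ and $2j$ of $\tilde{M}_R$ produces $2e_j\in\mathbb{R}^n$: the off-diagonal entries cancel because $\tilde{m}_{i,2j-1}=-\tilde{m}_{i,2j}$ for $i\neq j$, while $\tilde{m}_{j,2j-1}=\tilde{m}_{j,2j}=1$. So the columns of $\tilde{M}_R$ span $\mathbb{R}^n$ and $\dim\phi(M_R)=n$. The lower bound $\dim\Span(\gamma_R)\geq n$ would be obtained by evaluating $\gamma_R$ at $n$ suitably chosen real numbers (for instance, at points close to the zeros of the relevant $\mathsf{sn}$ factors) and checking linear independence via a Vandermonde-like determinant with elliptic entries, using the fact that $\mathsf{sn}(\cdot|k)$ separates points modulo the periods.

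The central inclusion $\Span(\gamma_R)\subseteq\phi(M_R)$ --- i.e., for every $t\in\mathbb{R}$ the vector $\gamma_R(t)$ is a linear combination of the rows of $\tilde{M}_R$ --- is where the argument is most involved. The natural tool is a Pfaffian/Kac--Ward formula for the boundary correlations of the $Z$-invariant Ising model in terms of Jacobi elliptic functions evaluated at the rhombus angles, building on the dimer representation of Boutillier, de Tili\`ere and Raschel. The strategy is to induct on the combinatorial complexity of $R$, for example the number of rhombi in a chosen tiling: peeling off a boundary rhombus reduces $R$ to a smaller region $R'$, and the update rules for $\gamma_R$ and $\tilde{M}_R$ under this move can be matched using the Jacobi addition formulas for $\mathsf{sn}$, $\mathsf{cn}$, $\mathsf{dn}$ together with the star--triangle relation dictating how $M_R$ transforms. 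The base case is immediate ($n=1$ gives $\tilde{M}_R=(1\ 1)$ and $\gamma_R$ a scalar function times $(1,1)$).

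The main obstacle I foresee is the combinatorial-algebraic bookkeeping: tracking how $J_p$, the sign $(-1)^{|J_p\cap[p]|}$ and the normalization $\Pi_p$ (with its $\sqrt{k'}$ rescaling) transform when a boundary rhombus is peeled, and reconciling these with the Kasteleyn signs appearing implicitly in the Pfaffian formula for $M_R$. The non-alternating hypothesis on $R$ is essential here: it prevents cyclic-order degeneracies of the form $v_i=-v_j=v_k=-v_l$ that would otherwise obstruct a clean sign matching in the inductive step. Once the elliptic identity underlying the rhombus-peeling move is established and the signs are reconciled, the three ingredients combine to give $\Span(\gamma_R)=\phi(M_R)$.
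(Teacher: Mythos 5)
Your overall strategy---induct on a local ``uncrossing'' move and match the transformation of $\gamma_R$ against that of $\tilde{M}_R$ via elliptic addition formulas---is indeed the shape of the paper's argument. But there are two genuine gaps. First, your base case is wrong: peeling boundary rhombi (equivalently, removing $\tau$-descents) does not reduce $n$; the induction terminates at a \emph{crossingless} region with the same $n$, where $\tau(p)=p+1$ for various $p$ and the graph has no edges. Handling this case is not immediate: the paper shows that $\gamma_R(\alpha_p)$ is proportional to $e_p+\varepsilon_{p,\tau(p)}e_{\tau(p)}$ (this is exactly where the non-alternating hypothesis is used, to guarantee $\gamma_p(\alpha_p)\neq 0$---not, as you suggest, for sign bookkeeping in the inductive step), and then peels off the arc $\{p,\tau(p)\}$ to decompose $\Span(\gamma_R)$ as a direct sum of $n$ lines matching the rows of $\tilde{M}_R$. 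Your claimed base case ``$n=1$'' never occurs unless $n=1$ to begin with.

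Second, the inductive step as you describe it is not established. The clean statement needed is an exact identity $\gamma_R(t)=\gamma_{R'}(t)\cdot g_j^{\bm{\alpha}}$, where $g_j^{\bm{\alpha}}$ is precisely the $2\times 2$-block matrix (built from $\mathsf{sn},\mathsf{cn}$ or their duals $\mathsf{sn}^*,\mathsf{cn}^*$ according to the parity of $j$) for which Galashin--Pylyavskyy already prove $\phi(M_R)=\phi(M_{R'})\cdot g_j^{\bm{\alpha}}$. Proving this identity is the real content of the paper (its Proposition~\ref{main_lemma}, a careful computation with the addition formulas \eqref{sn_formula}, \eqref{cn_formula} and the duality relations \eqref{dual_sn}--\eqref{dual_dn}); your proposal replaces it with an undeveloped appeal to a Pfaffian/Kac--Ward formula, which is a different and much heavier route that you do not carry out. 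This also lets the paper avoid your separate ``Vandermonde'' step: since each $g_j^{\bm{\alpha}}$ is invertible, $\dim\Span(\gamma_R)=n$ propagates from the base case for free, whereas your evaluation-point argument is delicate when angles coincide (which non-alternating regions allow). Finally, note that a region with a crossing need not have a $\tau$-descent (Remark~\ref{no_tau_descent}), so the induction requires an auxiliary procedure to create one; your rhombus-peeling formulation also silently assumes $R$ is simple/tileable, while the theorem is stated for abstract regions $(\tau,\bm{\alpha})$.
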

There is an analogous result for alternating regions, see Section
\ref{alternating_regions}. For regions with distinct angles $(\alpha_i)_{i\in
[2n]}$, a basis for $\Span(\gamma_R)$ is obtained in a simple way described by
the next proposition.

\begin{prop}
Let $R$ be a region composed of $2n$ unit vectors with angles
$\alpha_1,\dotsc,\alpha_{2n}$ such that $\alpha_i\neq\alpha_j$ for all $i\neq
j$. Write $J_1\cup\{1\}:=\{j_1,\dotsc,j_n\}$. Then
$\{\gamma_R(\alpha_{j_1}),\dotsc,\gamma_R(\alpha_{j_n})\}$ form a basis of
$\Span(\gamma_R)$.
\label{prop_basis}
\end{prop}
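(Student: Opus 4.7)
The plan is to apply Theorem~\ref{main_teo}, which gives $\dim\Span(\gamma_R)=n$, and therefore reduces the statement to showing that the $n$ vectors $\gamma_R(\alpha_{j_1}),\dotsc,\gamma_R(\alpha_{j_n})$ are linearly independent in $\R^{2n}$. I will order the indices so that $j_1=1<j_2<\dotsb<j_n$; this is allowed because $1\in J_1\cup\{1\}$, and $J_1\cup\{1\}$ has exactly $n$ elements, one representative from each $\tau$-pair (with $1$ representing the pair $\{1,\tau(1)\}$).

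The key reduction is to the invertibility of the $n\times n$ submatrix
\[
A:=\bigl(\gamma_{j_\ell}(\alpha_{j_{\ell'}})\bigr)_{\ell,\ell'\in[n]}
\]
obtained by keeping only the coordinates $p\in\{j_1,\dotsc,j_n\}$ of the vectors $\gamma_R(\alpha_{j_{\ell'}})$. The crucial observation is that $\gamma_p(\alpha_j)=0$ if and only if $j\in J_p$: the factor $\Pi_p$ is nowhere zero on $\R$ because $\mathsf{dn}$ has no real zeros for $k^2\in(-\infty,1)$, while $\mathsf{sn}(\alpha_j-\alpha_{j'})$ vanishes exactly when $j=j'$, using the distinctness of the angles together with the range prescribed by Definition~\ref{tau_shape} (which confines all differences to an interval where $0$ is the only zero of $\mathsf{sn}$). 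In particular the diagonal entries $\gamma_{j_\ell}(\alpha_{j_\ell})$ are nonzero because $j_\ell\notin J_{j_\ell}$, the triple $(p,p,\tau(p))$ not being a valid cyclic ordering.

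It remains to show $A$ is lower triangular, i.e.\ that $A_{\ell,\ell'}=0$ whenever $\ell<\ell'$. For such an $\ell'$ one has $j_{\ell'}>j_\ell\geq 1$, so $j_{\ell'}\in(J_1\cup\{1\})\setminus\{1\}=J_1$, which means $(1,j_{\ell'},\tau(j_{\ell'}))$ is in cyclic order; this amounts to $j_{\ell'}<\tau(j_{\ell'})$ as integers in $\{2,\dotsc,2n\}$. Combined with $j_\ell<j_{\ell'}$, the chain $j_\ell<j_{\ell'}<\tau(j_{\ell'})$ shows that starting from $j_\ell$ and running cyclically through $[2n]$ one meets $j_{\ell'}$ strictly before $\tau(j_{\ell'})$, so $(j_\ell,j_{\ell'},\tau(j_{\ell'}))$ is in cyclic order and $j_{\ell'}\in J_{j_\ell}$, forcing $A_{\ell,\ell'}=0$. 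Hence $\det A$ equals the nonzero product of diagonal entries, the $n$ vectors are linearly independent, and by the dimension count from Theorem~\ref{main_teo} they form a basis of $\Span(\gamma_R)$. The only mildly delicate point in carrying this out is the range check on $\alpha_j-\alpha_{j'}$ to rule out nonzero periods of $\mathsf{sn}$, but this is built into the construction of the angles in Definition~\ref{tau_shape} and constitutes no genuine obstacle.
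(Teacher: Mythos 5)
Your proof is correct and follows essentially the same route as the paper: extract the $n\times n$ submatrix on the columns $j_1,\dotsc,j_n$, observe it is triangular with nonzero diagonal (diagonal entries nonzero since $j_\ell\notin J_{j_\ell}$ and the angles are distinct, off-diagonal zeros since $j_{\ell'}\in J_{j_\ell}$ for $j_\ell<j_{\ell'}$), and combine rank $n$ with the dimension count from Theorem~\ref{main_teo}. If anything, your version is more careful than the paper's, which terse\-ly refers to ``the $n$ first columns'' where the columns indexed by $J_1\cup\{1\}$ are meant.
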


The next proposition complements Theorem~\ref{main_teo} by giving a practical
way of computing correlations knowing $\phi(M_R)$. Define the $2n\times n$
matrix $K_n$ by
\[ K_n=\frac{1}{2}\begin{pmatrix} 1 & 0 & \cdots & 0 \\
1 & 0 & \cdots & 0 \\
0 & 1 & \cdots & 0 \\
0 & 1 & \cdots & 0 \\
\vdots & \vdots & \ddots & \vdots \\
0 & 0 & \cdots & 1 \\
0 & 0 & \cdots & 1 \end{pmatrix}.\]
\begin{prop}[Galashin~\cite{galashin2022formula}]
Let $A$ be a $n\times 2n$ matrix whose row span is equal to $\phi(M_R)$. Then
the matrix $AK_n$ is invertible and it satifies
\[ (AK_n)^{-1}A=\tilde{M}_R. \]
\label{prop_K_n}
\end{prop}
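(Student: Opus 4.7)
The plan is to reduce everything to the single algebraic identity $\tilde{M}_R K_n = I_n$. First I would verify this identity by direct computation from the definitions. The $(i,j)$ entry of $\tilde{M}_R K_n$ equals $\frac{1}{2}(\tilde{m}_{i,2j-1} + \tilde{m}_{i,2j})$. When $i = j$ this is $\frac{1}{2}(1+1) = 1$; when $i \neq j$, the definition gives $\tilde{m}_{i,2j-1} = -\tilde{m}_{i,2j}$, so the sum vanishes. Hence $\tilde{M}_R K_n = I_n$.

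From this identity two things follow immediately. On the one hand, the rows of $\tilde{M}_R$ must be linearly independent (else $\tilde{M}_R K_n$ would be singular), so the row span $\phi(M_R)$ has dimension exactly $n$ and $\tilde{M}_R$ is a basis-matrix for $\phi(M_R)$. On the other hand, since $A$ is also an $n \times 2n$ matrix whose rows span $\phi(M_R)$, the rows of $A$ must likewise form a basis of this $n$-dimensional subspace, so there exists a unique invertible $n \times n$ matrix $P$ with
\[ A = P \tilde{M}_R. \]

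Multiplying on the right by $K_n$ and using $\tilde{M}_R K_n = I_n$ gives $AK_n = P$, which is invertible. Therefore
\[ (AK_n)^{-1} A = P^{-1} (P \tilde{M}_R) = \tilde{M}_R, \]
proving the proposition.

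There is essentially no obstacle here: the only real content is the pointwise verification that $\tilde{M}_R K_n = I_n$, after which the statement becomes a one-line exercise in the uniqueness of a basis representation. The significance of the proposition lies not in the proof but in the fact that it lets one extract the correlations $m_{i,j}$ from \emph{any} matrix $A$ realizing the doubling map, which is precisely how one will apply Theorem~\ref{main_teo}: produce a convenient $A$ from the function $\gamma_R$ (via Proposition~\ref{prop_basis}, for instance) and then recover $\tilde{M}_R$ by the formula $(AK_n)^{-1} A$.
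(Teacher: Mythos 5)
Your proof is correct and complete: the key identity $\tilde{M}_R K_n = I_n$ follows exactly as you compute it, and the rest is the standard basis-change argument. The paper itself gives no proof of this proposition (it is cited from Galashin), and your argument is precisely the one used there, so there is nothing to add.
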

So now give an example to illustrate computations.

\begin{exmp}
Let $n=2$ and $R:=R_2$ be the square region, see Figure~\ref{square}. We have
\[ \alpha_1=0, \;\alpha_2=\frac{\pi}{4}, \;\alpha_3=\frac{\pi}{2} \text{ and } \alpha_4=\frac{3\pi}{4}. \]
We also have
\[ \tau(1)=3, \; \tau(2)=4, \; \tau(3)=1 \text{ and } \tau(4)=2,\]
so we get
\[ J_1=\{2\}, J_2 = \{3\}, J_3=\{4\} \text{ and } J_4=\{1\}.\]

\begin{figure}[h!]
\centering
\def\svgwidth{5cm}
\begingroup%
  \makeatletter%
  \providecommand\color[2][]{%
    \errmessage{(Inkscape) Color is used for the text in Inkscape, but the package 'color.sty' is not loaded}%
    \renewcommand\color[2][]{}%
  }%
  \providecommand\transparent[1]{%
    \errmessage{(Inkscape) Transparency is used (non-zero) for the text in Inkscape, but the package 'transparent.sty' is not loaded}%
    \renewcommand\transparent[1]{}%
  }%
  \providecommand\rotatebox[2]{#2}%
  \newcommand*\fsize{\dimexpr\f@size pt\relax}%
  \newcommand*\lineheight[1]{\fontsize{\fsize}{#1\fsize}\selectfont}%
  \ifx\svgwidth\undefined%
    \setlength{\unitlength}{44.66663919bp}%
    \ifx\svgscale\undefined%
      \relax%
    \else%
      \setlength{\unitlength}{\unitlength * \real{\svgscale}}%
    \fi%
  \else%
    \setlength{\unitlength}{\svgwidth}%
  \fi%
  \global\let\svgwidth\undefined%
  \global\let\svgscale\undefined%
  \makeatother%
  \begin{picture}(1,0.80381238)%
    \lineheight{1}%
    \setlength\tabcolsep{0pt}%
    \put(0,0){\includegraphics[width=\unitlength,page=1]{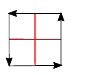}}%
    \put(0.34828328,0.02375431){\color[rgb]{0,0,0}\makebox(0,0)[lt]{\lineheight{1.25}\smash{\begin{tabular}[t]{l}$v_1$\end{tabular}}}}%
    \put(0.68598767,0.36429938){\color[rgb]{0,0,0}\makebox(0,0)[lt]{\lineheight{1.25}\smash{\begin{tabular}[t]{l}$v_2$\end{tabular}}}}%
    \put(0.34467082,0.70463916){\color[rgb]{0,0,0}\makebox(0,0)[lt]{\lineheight{1.25}\smash{\begin{tabular}[t]{l}$v_3$\end{tabular}}}}%
    \put(0,0){\includegraphics[width=\unitlength,page=2]{square.pdf}}%
    \put(-0.00836073,0.38338861){\color[rgb]{0,0,0}\makebox(0,0)[lt]{\lineheight{1.25}\smash{\begin{tabular}[t]{l}$v_4$\end{tabular}}}}%
    \put(0.70171864,0.02924296){\color[rgb]{0,0,0}\makebox(0,0)[lt]{\lineheight{1.25}\smash{\begin{tabular}[t]{l}$b_1$\end{tabular}}}}%
    \put(0.00667689,0.7272871){\color[rgb]{0,0,0}\makebox(0,0)[lt]{\lineheight{1.25}\smash{\begin{tabular}[t]{l}$b_2$\end{tabular}}}}%
  \end{picture}%
\endgroup%

\caption{The region $R_2$}
\label{square}
\end{figure}

Let $A$ be the matrix with rows $-\gamma_R(0)$ and $-\gamma_R(\frac{\pi}{4})$ that is
\begin{align*} A & = \begin{pmatrix}
\mathsf{sn}{\frac{\pi}{4}} & \mathsf{sn}(\frac{\pi}{2})\mathsf{dn}(\frac{\pi}{4})\frac{1}{k'} & \mathsf{sn}(\frac{3\pi}{4})\mathsf{dn}(\frac{\pi}{4})\frac{1}{k'} & 0 \\
0 & \mathsf{sn}(\frac{\pi}{4})\mathsf{dn}(\frac{\pi}{4})\frac{1}{k'} & \mathsf{sn}(\frac{\pi}{2})\mathsf{dn}(\frac{\pi}{4})\frac{1}{k'} & \mathsf{sn}(\frac{\pi}{4}) \\
\end{pmatrix} \\
& = \begin{pmatrix}
\frac{1}{\sqrt{1+k'}} & \frac{1}{\sqrt{k'}} & \frac{1}{\sqrt{k'(1+k')}} & 0 \\
0 & \frac{1}{\sqrt{k'(1+k')}} & \frac{1}{\sqrt{k'}} & \frac{1}{\sqrt{1+k'}} \\
\end{pmatrix}
\end{align*}
using elliptic functions identities~\eqref{value_sn} and~\eqref{value_dn} for
the last equality. Thus
\[ AK_n = \frac{1}{2}\begin{pmatrix}
\frac{1}{\sqrt{1+k'}}+\frac{1}{\sqrt{k'}} & \frac{1}{\sqrt{k'(1+k')}} \\
\frac{1}{\sqrt{k'(1+k')}} &  \frac{1}{\sqrt{k'}}+\frac{1}{\sqrt{1+k'}} \\
\end{pmatrix}
\]
Then we compute
\[ (AK_n)^{-1} = \frac{1}{\frac{1}{1+k'}+\frac{1}{\sqrt{k'(1+k')}}}\begin{pmatrix} \frac{1}{\sqrt{1+k'}}+\frac{1}{\sqrt{k'}} & -\frac{1}{\sqrt{k'(1+k')}} \\
-\frac{1}{\sqrt{k'(1+k')}} & \frac{1}{\sqrt{k'}}+\frac{1}{\sqrt{1+k'}} \\
\end{pmatrix} \]
and finally we obtain
\[ (AK_n)^{-1}A = \begin{pmatrix}
1  & 1 & \frac{1}{\sqrt{k'}+\sqrt{1+k'}} & -\frac{1}{\sqrt{k'}+\sqrt{1+k'}} \\
-\frac{1}{\sqrt{k'}+\sqrt{1+k'}} & \frac{1}{\sqrt{k'}+\sqrt{1+k'}} & 1 & 1 \\
\end{pmatrix}.
\]
Therefore we deduce that
\[ \langle \sigma_{b_1}\sigma_{b_2} \rangle = \frac{1}{\sqrt{k'}+\sqrt{1+k'}}. \]
In this simple case, the same value can be found by the following direct
computation. For the $R_2$ region, the constant $j_e$ associated to the single
edge $e$ of the graph is equal to
$\frac{1}{2}\ln\left(\frac{1+\mathsf{sn}(\frac{\pi}{4})}{\mathsf{\cn}(\frac{\pi}{4})}\right)$
by Equation~\eqref{Z_invariant_weights}. Then

\begin{align*}
\langle \sigma_{b_1}\sigma_{b_2} \rangle & = \frac{2e^J-2e^{-J}}{2e^J+2e^{-J}} \\
	& = \left(\sqrt{\frac{1+\mathsf{sn}(\frac{\pi}{4})}{\mathsf{\cn}(\frac{\pi}{4})}}-\sqrt{\frac{\mathsf{\cn}(\frac{\pi}{4})}{1+\mathsf{sn}(\frac{\pi}{4})}}\right)\left(\sqrt{\frac{1+\mathsf{sn}(\frac{\pi}{4})}{\mathsf{\cn}(\frac{\pi}{4})}}+\sqrt{\frac{\mathsf{\cn}(\frac{\pi}{4})}{1+\mathsf{sn}(\frac{\pi}{4})}}\right)^{-1} \\
	& = \frac{1+\mathsf{sn}(\frac{\pi}{4})-\mathsf{\cn}(\frac{\pi}{4})}{1+\mathsf{sn}(\frac{\pi}{4})+\mathsf{\cn}(\frac{\pi}{4})} \\
	& = \frac{1+\sqrt{1+k'}-\sqrt{k'}}{1+\sqrt{1+k'}+\sqrt{k'}} \quad \text{ using the identities~\eqref{value_sn} and~\eqref{value_cn}} \\
	& = \frac{1+\sqrt{1+k'}-\sqrt{k'}}{(1+\sqrt{1+k'}-\sqrt{k'})(\sqrt{1+k'}+\sqrt{k'})} \\
	& = \frac{1}{\sqrt{k'}+\sqrt{1+k'}}.
\end{align*}

\end{exmp}

\begin{rem}
In general, the method used to compute correlations in the Ising model goes
through the \emph{Kasteleyn matrix}~\cite{fisher, Kasteleyn, fisher2} or the
\emph{Kac-Ward matrix}~\cite{kacwardref,lis2016short} associated to it. But
these matrices depend on the rhombus tiling with a number of rows and columns
which is of order the number of tiles that is $O(n^2)$. Here the matrix $AK_n$
is only of size $n\times n$ but the coefficients are more complex.
\end{rem}

\paragraph{Outline of the paper.} In Section~\ref{section_preli}, we introduce
the model and the notations. Then we prove the main theorem of the paper in
Section~\ref{section_main_teo}. We discuss duality and near-critical case in the
rest.

\paragraph{Acknowledgements.} I am very grateful to my supervisors C\'edric
Boutillier and B\'eatrice de Tili\`ere.

\section{Preliminaries}
\label{section_preli}

We first define general polygonal regions and graphs arising from them since the
proof of the main result goes through degenerate regions. Then we introduce the
$Z$-invariant Ising model and we explain how the doubling map simplifies the
computations of boundary correlations.

\begin{rem}
We follow closely the notations found in~\cite{galashin2022formula} with a few
exceptions: the angles of a region are written $\alpha$ instead of $\theta$, we
use the word \emph{traintracks} instead of \emph{pseudolines} and the semi-angle
$\theta_e$ associated to a rhombus is equal to $\frac{\pi}{2} $ minus the one of
Galashin.
\end{rem}

\subsection{Regions and graphs}

Let $n\geq 1$ be an integer. Let $\tau:[2n]\rightarrow [2n]$ be a
fixed-point-free involution and let
$\bm{\alpha}=(\alpha_1,\dotsc,\alpha_{2n})\in\R^{2n}$ be a sequence of $2n$
angles satisfying the two following conditions:

\begin{equation}
\forall j < \tau(j), \quad \alpha_{\tau(j)}=\alpha_j+\frac{\pi}{2},
\label{cond_1}
\end{equation} 
and
\begin{equation}
\forall j<k<\tau(j)<\tau(k), \quad \alpha_j<\alpha_k<\alpha_{\tau(j)}<\alpha_{\tau(k)}. 
\label{cond_2}
\end{equation} 

%

\begin{defi}
  A sequence of angles $\bm{\alpha}$ satisfying the conditions~\eqref{cond_1}
  and~\eqref{cond_2}
  is called a $\tau$\emph{-shape}. A \emph{region} $R$ is a pair
  $(\tau,\bm{\alpha})$ with $\tau$ a fixed-point-free involution and $\bm{\alpha}$
  a $\tau$-shape.
  \label{tau_shape}
\end{defi}
Define $2n$ unit vectors $v_1,\dotsc,v_{2n}$ by
\[ \forall j\in [2n], \quad v_j:=e^{i2\alpha_j}.\]
Note that the number $2$ in the definition is there to simplify computations in
the proof of the main result. A region $R$ can be drawn in the plane as a closed
polygonal chain with sides $v_1,\dotsc,v_{2n}$ (see Figure~\ref{region_1}). Note
that in general this polygonal chain may intersect itself. If it is
non-self-intersecting, we say that the region $R$ is \emph{simple}.

Let $d_1,\dotsc,d_{2n}$ be $2n$ points drawn counterclockwise on a disk. A
\emph{traintrack arrangement} $\mathscr{T}$ of $(d_1,\dotsc,d_{2n})$ is a set of
$n$ lines, called \emph{traintracks}, which pair these $2n$ points such that
there is no self-intersection, any two lines intersect at most one, intersection
points are in the interior of the disk and no three lines intersect at one
point. The pairing induced by a traintrack arrangement $\mathscr{T}$ defines a
fixed-point-free involution $\tau:=\tau_{\mathscr{T}}$ where for all
$j,k\in[2n]$ we have $\tau(k)=j$ and $\tau(j)=k$ if and only if $d_j$ and $d_k$
are paired. Conversely, for every fixed-point-free involution $\tau$, there
exists a traintracks arrangement $\mathscr{T}$ such that
$\tau=\tau_{\mathscr{T}}$: draw straight lines pairing $d_j$ to $d_{\tau(j)}$
for all $j\in [n]$ for instance (avoiding three lines intersection by curving
the lines if needed).

Let $R=(\tau,\bm{\alpha})$ be a region and let $\mathscr{T}$ be a traintrack
arrangement such that $\tau=\tau_{\mathscr{T}}$. Note that $\mathscr{T}$ divides
the region $R$ into faces that can be colored in black and white in a bipartite
way such that, for all $j\in[n]$, the face next to the arc of the disk
connecting $d_{2j-1}$ and $d_{2j}$ is black (see Figure~\ref{region_2}). 

\begin{figure}[ht!]
\centering
\def\svgwidth{8cm}
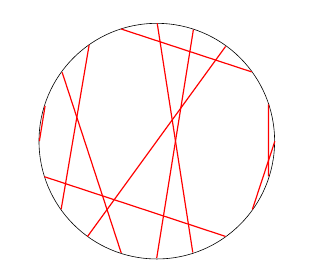
\caption{Another way of representing the region $R$ of Figure~\ref{region_1} (with a different choice of traintrack arrangement)}
\label{region_2}
\end{figure}

\begin{defi}
Let $G^{\bullet}_{\mathscr{T}}=(V,E)$ be the graph whose vertices are the black
faces and whose edges join black faces that are incident to a common
intersection point of traintracks (see Figure~\ref{regions_graph}). Let
$G^{\circ}_{\mathscr{T}}$ be the \emph{dual graph} of
$G^{\bullet}_{\mathscr{T}}$, \emph{i.e.}, the graph whose vertices are the white
faces and whose edges join white faces that are incident to a common
intersection point of traintracks. Let $G^{\lozenge}_{\mathscr{T}}$ be the
\emph{diamond graph} of $G^{\bullet}_{\mathscr{T}}$ that is the bipartite graph
whose vertices are the white faces and the black faces and whose edges join
white faces and black faces that are adjacent.
\label{def_graphs}
\end{defi}

\begin{rem}
The choice of coloring is arbitrary and switching the role of black and white is
equivalent to switching the definition of $G^{\bullet}_{\mathscr{T}}$ and its
dual. If one wants to choose the other way of coloring, it suffices to relabel
$(v_1,\dotsc,v_{2n})$ into $(v_2,\dotsc,v_{2n},v_1)$ to get back to our setting.
This choice of labeling is related to the duality of the Ising model.
For a more extensive discussion about this duality, see
Section~\ref{section_duality}.
\end{rem}

There are $n$ boundary black faces that we denote by $b_1,\dotsc,b_n$ where
$b_j$ is the black face adjacent to the vertices $d_{2j-1}$ and $d_{2j}$ for all
$j\in[n]$. Note that we may have several boundary points
$b_{j_1},\dotsc,b_{j_k}$ corresponding to the same black face, as $b_3$ and
$b_5$ in Figure~\ref{region_2}. In this case, we consider these points as if
they were ``contracted'' into a single vertex and we set $\langle
\sigma_{b_{j_r}}\sigma_{b_{j_s}}\rangle :=1$ for all $r,s\in [k]$,
see~\cite[Definition 6.1]{galashin2020ising} for details.

Now for each edge $e\in E$, we define an angle $\theta_e$ as follows. Each edge
$e$ corresponds to the intersection point of a traintrack connecting $d_j$ to
$d_{\tau(j)}$ and another traintrack connecting $d_k$ to $d_{\tau(k)}$ for some
$j<k<\tau(j)<\tau(k)$. We set either $\theta_e:=\alpha_k-\alpha_j$ or
$\theta_e:=\alpha_{\tau(j)}-\alpha_k$ depending on how the two black faces
linked by $e$ are located relative to $j,k,\tau(j),\tau(k)$, see Figure
\ref{def_theta}. For simple regions, there is a \emph{rhombus tiling} associated
to a traintrack arrangement (see~\cite{kenyon2005rhombic}) and then $\theta_e$
corresponds to the semi-angle adjacent to $e$ of the rhombus containing $e$, see
Figure~\ref{rhombus_theta}.

\begin{figure}[ht!]
\centering
\def\svgwidth{4.8cm}
\subcaptionbox{Angle $\theta_e$ in the rhombus containing $e$ \label{rhombus_theta}}
{
\begingroup%
  \makeatletter%
  \providecommand\color[2][]{%
    \errmessage{(Inkscape) Color is used for the text in Inkscape, but the package 'color.sty' is not loaded}%
    \renewcommand\color[2][]{}%
  }%
  \providecommand\transparent[1]{%
    \errmessage{(Inkscape) Transparency is used (non-zero) for the text in Inkscape, but the package 'transparent.sty' is not loaded}%
    \renewcommand\transparent[1]{}%
  }%
  \providecommand\rotatebox[2]{#2}%
  \newcommand*\fsize{\dimexpr\f@size pt\relax}%
  \newcommand*\lineheight[1]{\fontsize{\fsize}{#1\fsize}\selectfont}%
  \ifx\svgwidth\undefined%
    \setlength{\unitlength}{47.67281811bp}%
    \ifx\svgscale\undefined%
      \relax%
    \else%
      \setlength{\unitlength}{\unitlength * \real{\svgscale}}%
    \fi%
  \else%
    \setlength{\unitlength}{\svgwidth}%
  \fi%
  \global\let\svgwidth\undefined%
  \global\let\svgscale\undefined%
  \makeatother%
  \begin{picture}(1,0.74977479)%
    \lineheight{1}%
    \setlength\tabcolsep{0pt}%
    \put(0,0){\includegraphics[width=\unitlength,page=1]{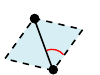}}%
    \put(0.51090614,0.30054186){\color[rgb]{0,0,0}\makebox(0,0)[lt]{\lineheight{1.25}\smash{\begin{tabular}[t]{l}$\theta_e$\end{tabular}}}}%
    \put(0,0){\includegraphics[width=\unitlength,page=2]{rhombus_theta.pdf}}%
    \put(0.25520212,0.64702744){\color[rgb]{0,0,0}\makebox(0,0)[lt]{\lineheight{1.25}\smash{\begin{tabular}[t]{l}$e$\end{tabular}}}}%
  \end{picture}%
\endgroup%
}
\def\svgwidth{10cm}
\subcaptionbox{The two cases of the definition of $\theta_e$: $\theta_e=\alpha_k-\alpha_j$ on the left and $\theta_e=\alpha_{\tau(j)}-\alpha_k$ on the right. \label{def_theta}}
{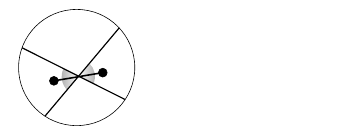}
\caption{Rhombus definition and abstract definition of the angle 
$\theta_e$}
\end{figure}

It will be convenient to extend the definition of $\tau$ and $\bm{\alpha}$.
Given a fixed-point-free involution $\tau:[2n]\rightarrow [2n]$, define
$\tilde{\tau}:\Z\rightarrow \Z$ to be a lift of $\tau$ satisfying the following
conditions:
\[ \forall k \in \Z, \quad \tilde{\tau}(k+2n)=\tilde{\tau}(k)+2n, \]
\[ \forall k \in \Z,\quad k<\tilde{\tau}(k)<k+2n, \]
and
\[ \forall k \in [2n], \quad \tilde{\tau}(k)\equiv \tau(k) (\text{mod} \; 2n) .\]
Similarly, given a sequence of angles $\bm{\alpha}=(\alpha_j)_{j\in [2n]}$,
define $\tilde{\bm{\alpha}}=(\tilde{\alpha}_j)_{j \in \Z}$ by
\[ \forall k \in [2n], \quad \tilde{\alpha}_k=\alpha_k \]
and
\[ \forall k \in \Z, \quad \tilde{\alpha}_{k+2n}=\tilde{\alpha}_k+\pi .\]
Finally, for $k\in \Z$, set $\tilde{v}_k=\exp(2i\tilde{\alpha}_k)$. This notation allows us to have more symmetric formulas. Define, for all $p\in \Z$,
\begin{equation} \tilde{J}_p:=\{\tilde{\tau}(j)\;|\;j\in\Z \text{ such that } j<p \text{ and } \tilde{\tau}(j)>p \}.
\label{tilde_Jp}
\end{equation}
The set $\tilde{J}_p$ coincides modulo $2n$ with the set $J_p$ defined in~\eqref{Jp} for $p\in [2n]$. Equality~\eqref{gamma_1} can then be rewritten, for $p\in [2n]$,
\begin{equation}
 \gamma_p(t)=\Pi_p(t)\prod_{j\in \tilde{J}_p}\mathsf{sn}(t-\tilde{\alpha}_j)
 \label{tilde_gamma}
\end{equation}
using the fact that $\mathsf{sn}(u+\pi)=-\mathsf{sn}(u)$ for all $u\in \C$. We
extend the definition of $\gamma_p(t)$ for all $p\in \Z$ using
Equality~\eqref{tilde_gamma} as a definition. Note that we have
$\gamma_{p+2n}(t)=(-1)^{n-1}\gamma_p(t)$ for all $p\in \Z$. From now on, we
switch freely between the notation with or without tilde.

\subsection{The Z-invariant Ising Model}\label{section_invariant}
Let $R$ be a region and let $\mathscr{T}$ be a traintrack arrangement. Consider
the \emph{Ising model} on $G_{\mathscr{T}}^{\bullet}$ with \emph{coupling
constants} $(j_e)_{e\in E}$. Following Baxter~\cite{baxter}, the
$(j_e)_{e\in E}$ are chosen in a way that boundary correlations depend only on
the region $R$ and not on the traintrack arrangement. More precisely, we call
\emph{star} a
vertex of degree $3$ and we define a \emph{star-triangle move} to be a local
tranformation of a triangle in the graph $G_{\mathscr{T}}^{\bullet}$ into a star
which
gives a new graph $G_{\mathscr{T}'}^{\bullet}$ (this can be understood as the
movement of
a traintrack over an intersection point, see Figure~\ref{star_triangle}).
\begin{figure}[ht!]
\centering
\def\svgwidth{10cm}
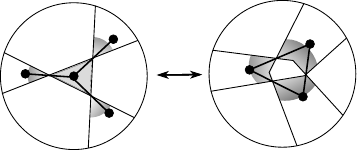
\caption{Star-triangle move}
\label{star_triangle}
\end{figure}
The Ising model is said to be $Z$\emph{-invariant} if, when decomposing the
partition function according to the possible spin configurations at the three
vertices of a star/triangle, it only changes by a constant when performing the
star-triangle move, this constant being independent of the choice of the spins
at the three vertices. These constraints can be written as a set of equations
called the \emph{Yang-Baxter equations} for the Ising model. A parametrization
of the solutions of these equations was found by Baxter in~\cite{baxter} and
then rewritten in~\cite{boutillier2019z}. Let $k$ be a complex number such that
$k^2\in(-\infty,1)$, $k$ is known as the \emph{elliptic modulus}. For every edge
$e$ of $G_{\mathscr{T}}^{\bullet}$, set
\begin{equation} j_e := \frac{1}{2}\ln\left(\frac{1+\mathsf{sn}(\theta_e|k)}{\mathsf{cn}(\theta_e|k)}\right).
\label{Z_invariant_weights}
\end{equation}
Kenyon showed in~\cite{Kenyon2005TilingAP} that any graph
$G_{\mathscr{T}}^{\bullet}$, where $\mathscr{T}$ is a traintrack arrangement,
can be turned into a graph
$G_{\mathscr{T}'}^{\bullet}$, with $\mathscr{T}'$ another traintrack arrangement, applying
only star-triangle moves. This implies that boundary correlations depend only on
the region and not on the choice of the traintrack arrangement.

\subsection{Removing a crossing}

Following Galashin~\cite{galashin2022formula}, let us define formally
the notion of crossing as well as that of descent, which corresponds to a
crossing adjacent to a boundary.

\begin{defi}
Let $R=(\tau,\bm{\alpha})$ be a region. We say that $R$ has a \emph{crossing} if
there exists $j,k \in [2n]$ such that $j<k<\tau(j)<\tau(k)$.
An index $j\in [2n]$ satisfying $j<j+1<\tilde{\tau}(j)<\tilde{\tau}(j+1)$ is called a $\tau$\emph{-descent}.
\end{defi}

\begin{rem}\label{no_tau_descent}
  Note that in~\cite{galashin2022formula}, the author claims that a
  region with crossings necessarily has a $\tau$-descent. But this is not always
  the case, see Figure~\ref{no_descent} for an example. We thus consider this
  possibility in the proof of the main theorem.
\end{rem}

\begin{figure}[ht]
\centering
\def\svgwidth{6cm}
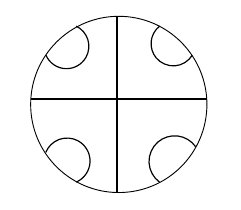
\caption{A region with a crossing but without a $\tau$-descent}\label{no_descent}
\end{figure}

\begin{defi}\label{region_remove}
  Let $R=(\tau,\bm{\alpha})$ be a region with a $\tau$-descent $j\in [2n]$. Denote
  by $t_j$ the transposition of $[2n]$ that swaps $j$ and $j+1$. Define a new
  region $R':=R\cdot t_j:=(\tau',\bm{\alpha}')$ by $\tau'=\tau\circ t_j$ and, for
  all $l\in[2n]$, $\alpha'_{l}=\alpha_{t_j(l)}$.
  One can check that $R'$ is actually a region. Note also that if $R$ is
  non-alternating, then $R'$ is also non-alternating.
\end{defi}

\begin{figure}[ht]
\centering
\def\svgwidth{6cm}
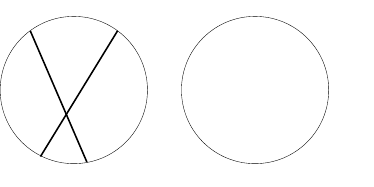
\caption{A region $R$ with a $\tau$-descent $j$ (left) and the region $R'=R\cdot t_j$ (right)}
\label{remove_cross}
\end{figure}

We now explain how the linear space $\phi(M_R)$ is modified when we remove a
$\tau$-descent. Let $R=(\tau,\bm{\alpha})$ be a region with a $\tau$-descent
$j\in [2n]$. There is an edge $e$ associated to the intersection point between
the traintrack incident to $d_j$ and the traintrack incident to $d_{j+1}$. Set
$s_e:= (\ch(2j_e))^{-1}$ and $c_e:=\tanh(2j_e)$. We can express these quantities
as functions of $\alpha_j$ and $\alpha_{j+1}$. To gain in clarity, define the
functions $\mathsf{sn}^*$, $\mathsf{cn}^*$ and $\mathsf{dn}^*$ as follows. Let
$k^*:=i\frac{k}{k'}$ be the \emph{dual parameter} of $k$ and for all $t\in\C$
set
\[ \mathsf{sn}^*(t)=\sn\left(\frac{2K(k^*)}{\pi}t\Bigl|\Bigr.k^*\right), \quad  \mathsf{cn}^*(t)=\cn\left(\frac{2K(k^*)}{\pi}t\Bigl|\Bigr.k^*\right)\]
and
\[ \mathsf{dn}^*(t)=\dn\left(\frac{2K(k^*)}{\pi}t\Bigl|\Bigr.k^*\right). \]
Note that these rescaled functions are then $2\pi$-periodic.
\begin{itemize}
\item If $j$ is odd then $\theta_e=\frac{\pi}{2}-(\alpha_{j+1}-\alpha_j)$. Therefore,
\[
s_e  = \frac{1}{2}\left(\frac{1+\mathsf{sn}(\theta_e)}{\mathsf{cn}(\theta_e)}+\frac{\mathsf{cn}(\theta_e)}{1+\mathsf{sn}(\theta_e)}\right) 
	 = \mathsf{cn}(\theta_e) 
	 = \mathsf{cn}\left(\frac{\pi}{2}-(\alpha_{j+1}-\alpha_j) \right).
\]
By Table~\ref{table_formula} and the duality formula~\eqref{dual_sn} we thus get
\[ s_e = k'\mathsf{sd}\left(\alpha_{j+1}-\alpha_j\right) = \mathsf{sn}^*\left(\alpha_{j+1}-\alpha_j\right). \]
Similarly, using again Table~\ref{table_formula} and the other duality formula~\eqref{dual_cn}, we have
\[ c_e  = \mathsf{sn}\left(\frac{\pi}{2}-(\alpha_{j+1}-\alpha_j)\right) 
 = \mathsf{cd}\left(\alpha_{j+1}-\alpha_j\right) 
 = \mathsf{cn}^*\left(\alpha_{j+1}-\alpha_j\right).
\]

\item If $j$ is even then $\theta_e=\alpha_{j+1}-\alpha_j$. Therefore
\[ s_e = \mathsf{cn}\left(\alpha_{j+1}-\alpha_j\right) \; \text{ and } \; c_e = \mathsf{sn}\left(\alpha_{j+1}-\alpha_j\right). \]
\end{itemize}
 For all $j\in \Z$, set $c_j=\mathsf{cn}(\alpha_{j+1}-\alpha_j), s_j=\mathsf{sn}(\alpha_{j+1}-\alpha_j), c_j^*=\mathsf{cn}^*(\alpha_{j+1}-\alpha_j)$ and $s_j^*=\mathsf{sn}^*(\alpha_{j+1}-\alpha_j)$.

\begin{defi}
Define, for all $j\in \Z$, the $2\times 2$ matrix $B^{\bm{\alpha}}_j$ by

\[ B_j^{\bm{\alpha}} :=\left\{
    \begin{array}{ll}
        \begin{pmatrix}
1/c_j^* & s_j^*/c_j^* \\
s_j^*/c_j^* & 1/c_j^*
\end{pmatrix} & \text{if } j \text{ is odd,} \\
        \begin{pmatrix}
1/c_j & s_j/c_j \\
s_j/c_j & 1/c_j
\end{pmatrix} & \text{if } j \text{ is even.}
    \end{array}
\right. \]
For $1\leq j<2n$, define the matrix $g_j^{\bm{\alpha}}$ which coincides with the
$2n\times 2n$ identity matrix except for the $2\times 2$ block
$B^{\bm{\alpha}}_j$ which appears in rows and columns $j,j+1$. For $j=2n$, the
matrix $g_{2n}^{\bm{\alpha}}$ coincides with the $2n\times 2n$ identity matrix
except for the following four entries:
\[ (g_{2n}^{\bm{\alpha}})_{1,1}=(g_{2n}^{\bm{\alpha}})_{2n,2n}=\frac{1}{s_{2n}} \text{ and } (g_{2n}^{\bm{\alpha}})_{1,2n}=(g_{2n}^{\bm{\alpha}})_{2n,1}=(-1)^{n-1}\frac{c_{2n}}{s_{2n}}. \]
\label{g_p}
\end{defi}

Theorem~3.22 of Galashin and Pylyavskyy in~\cite{galashin2020ising} gives in a
more general setting an equality between the linear spaces $\phi(M_R)$ and
$\phi(M_{R'})$. In our particular case, it is read as follows. For a linear
subspace $V\subset \R^{2n}$ of dimension $n$ and for a matrix $g$ of size
$2n\times 2n$, define the linear subspace $V\cdot g$ as $\{v\cdot g : v\in V\}$
where $v\in V$ is seen as a row vector.

\begin{teo}[\cite{galashin2020ising}]
Let $R=(\tau,\bm{\alpha})$ be a region and $j\in[2n]$ be a $\tau$-descent. Then for $R':=R\cdot t_j$, we have
\[ \phi(M_R)= \phi(M_{R'})\cdot g_j^{\bm{\alpha}}. \]
\label{theo_phi}
\end{teo}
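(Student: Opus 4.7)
The plan is to derive this as a specialization of Galashin and Pylyavskyy's general gluing formula [galashin2020ising, Theorem 3.22] to the $Z$-invariant parametrization. In their framework, the subspaces $\phi(M_R)$ and $\phi(M_{R'})$ attached to two regions that differ by removing a single boundary edge $e$ are related by right multiplication by a $2n\times 2n$ matrix whose only non-identity block involves the two scalars $s_e=1/\ch(2j_e)$ and $c_e=\tanh(2j_e)$ associated to $e$. Since a $\tau$-descent $j$ produces, in any traintrack arrangement realising $\tau$, a crossing of the two traintracks through $d_j$ and $d_{j+1}$ that is adjacent to the boundary arc between them, the associated edge $e$ is exactly such a boundary edge, and performing $R\cdot t_j$ amounts to erasing that crossing. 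So the statement will follow once one checks that, under the $Z$-invariant weights~\eqref{Z_invariant_weights}, the Galashin--Pylyavskyy block specializes to $B_j^{\bm{\alpha}}$ and the wrap-around case matches the prescribed entries of $g_{2n}^{\bm{\alpha}}$.

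The first step is thus to identify the abstract matrix appearing in [galashin2020ising, Theorem 3.22] with the one in Definition~\ref{g_p}. This is purely algebraic: plug the $Z$-invariant expressions for $s_e$ and $c_e$, computed in the case analysis displayed just before Definition~\ref{g_p}, into their formula. For $j$ odd we have $\theta_e=\frac{\pi}{2}-(\alpha_{j+1}-\alpha_j)$, and the duality identities~\eqref{dual_sn}, \eqref{dual_cn} between $\mathsf{sn},\mathsf{cn}$ and their starred counterparts collapse the expressions to $s_e=\mathsf{sn}^*(\alpha_{j+1}-\alpha_j)=s_j^*$, $c_e=\mathsf{cn}^*(\alpha_{j+1}-\alpha_j)=c_j^*$, which reconstructs $B_j^{\bm{\alpha}}$ in the odd case. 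For $j$ even one has $\theta_e=\alpha_{j+1}-\alpha_j$ and the identities become tautological, producing the even-case block. This is entirely a bookkeeping exercise, so it should pose no real difficulty.

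The second step handles the boundary case $j=2n$, where $j+1$ must be interpreted cyclically (using the lift $\tilde{\tau}$ and $\tilde{\bm{\alpha}}$). Here the modified entries of $g_{2n}^{\bm{\alpha}}$ lie not in a contiguous $2\times 2$ block but in the four corners, and they carry an extra $(-1)^{n-1}$ factor. This factor will come out naturally from the monodromy $\gamma_{p+2n}(t)=(-1)^{n-1}\gamma_p(t)$ observed earlier (equivalently, from the signs built into the definition of $\tilde M_R$): identifying row/column $2n+1$ with row/column $1$ in the abstract transformation matrix forces the sign $(-1)^{n-1}$ in the off-diagonal wrap-around entries. One also has to reconcile the use of $\mathsf{sn}$, $\mathsf{cn}$ (rather than the starred versions) in the $j=2n$ entries with the parity rule used for $j<2n$, which is consistent because $2n$ is even.

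The main obstacle is conventional rather than mathematical: one must carefully match the sign and indexing conventions between [galashin2020ising] and the present paper (in particular the shift $\theta_e\mapsto\frac{\pi}{2}-\theta_e$ flagged in the introductory remark on notation, and the labeling choice that makes $b_j$ adjacent to $d_{2j-1},d_{2j}$). Once the dictionary between the two conventions is set up, each entry of $g_j^{\bm{\alpha}}$ is verified by a one-line application of a Jacobi-elliptic identity, and the $j=2n$ case is a cyclic boundary bookkeeping, so no new ideas beyond Theorem 3.22 of [galashin2020ising] are required.
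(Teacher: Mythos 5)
Your proposal matches what the paper does: Theorem~\ref{theo_phi} is not reproved here but imported from~\cite{galashin2020ising} (Theorem~3.22), with the translation into the present setting carried out exactly as you describe, via the computation of $s_e=(\ch(2j_e))^{-1}$ and $c_e=\tanh(2j_e)$ under the weights~\eqref{Z_invariant_weights} and the duality identities~\eqref{dual_sn},~\eqref{dual_cn} in the paragraphs preceding Definition~\ref{g_p}. Your outline of the parity-dependent specialization to $B_j^{\bm{\alpha}}$ and the cyclic $(-1)^{n-1}$ wrap-around for $j=2n$ is consistent with the paper's (implicit) reading of that result.
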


\begin{rem}
\leavevmode
\begin{itemize}
\item Note that for a $\tau$-descent $j\in [2n]$, the matrix $g_j^{\bm{\alpha}}$
  is well-defined. Indeed, we have $\tau(j)\neq j+1$. Then
  $\alpha_j<\alpha_{j+1}<\alpha_{j}+\frac{\pi}{2}$ by Condition~\eqref{cond_2}.
  Thus $c_j\neq 0$ and $c_j^*\neq 0$ which are the numbers appearing as
  denominators of the fractions defining the entries of $B_j^{\bm{\alpha}}$.
  In that case, $g_j^{\bm{\alpha}}$ is invertible and its determinant is equal
  to~1.
\item The critical $Z$-invariant Ising model corresponds to the elliptic modulus
  $k=0$. In this case, we have $k=k^*=0$ so there is no need to distinguish the
  parity of $j$ in the definition of $B_j^{\bm{\alpha}}$ and $g_j^{\bm{\alpha}}$
  since $c_j=c_j^*$ and $s_j=s_j^*$.  The computations of the next section are
  therefore much simpler in the critical $Z$-invariant case.
\end{itemize}
\end{rem}

\section{Explicit formula for the doubling map}
\label{section_main_teo}

\subsection{Main result}

The main result of this paper gives an explicit expression for $\phi(M_R)$.
Recall that, to a region $R$, we associate a curve $\gamma_R:\R\rightarrow
\R^{2n}$ with coordinates $\gamma_R(t) = (\gamma_1(t),\ldots,\gamma_{2n}(t))$
where
\begin{equation}
 \forall p \in [2n], \quad \gamma_p(t)=\Pi_p(t)\prod_{j\in \tilde{J}_p}\mathsf{sn}(t-\tilde{\alpha}_j)
 \label{gamma}
\end{equation}
with
\begin{equation} \Pi_p(t) := \prod_{j=1}^{2\lfloor\frac{p}{2}\rfloor} \frac{\mathsf{dn}(t-\tilde{\alpha}_j)}{\sqrt{k'}} = \prod_{j=1}^{\lfloor\frac{p}{2}\rfloor}\frac{\mathsf{dn}(t-\tilde{\alpha}_{2j})}{\mathsf{dn}(t-\tilde{\alpha}_{\tau(2j-1)})}.
\label{product}
\end{equation}

We make several remarks concerning Formula~\eqref{gamma}.

\begin{rem}\label{remark_formula}
\leavevmode
\begin{itemize}
\item The function $\gamma_R$ depends only on the region $R$ and not on the
  rhombus tiling.
\item By changing the choice of the black/white coloring which leads to
  Definition~\ref{def_graphs}, it implies a switch between ``odd'' and ``even''
  in Definition~\ref{g_p}. All results and computations remain valid by
  replacing in~\eqref{gamma} and~\eqref{product}, but also in the coming
  computations, the rescaled `primal' Jacobi elliptic function
  $\mathsf{sn},\mathsf{dn},\mathsf{cn}$ by their dual version
  $\mathsf{sn}^*,\mathsf{dn}^*,\mathsf{cn}^*$.
\end{itemize}
\end{rem}

The product $\Pi_p(t)$ can be seen as a product associated to a
path. Let $R$ be a region and let $\mathscr{T}$ be the set of $n$ train-tracks
associated to it with endpoints $d_1,\dotsc,d_{2n}$ and associated angles
$\alpha_1,\dotsc,\alpha_{2n}$. Let $G^{\lozenge}_{\mathscr{T}}$ be the diamond
graph of Definition~\ref{def_graphs}. Let $f$ and $f'$ be two vertices of
$G^{\lozenge}_{\mathscr{T}}$. To a path $\Gamma_{f,f'}$ in
$G^{\lozenge}_{\mathscr{T}}$ going from $f$ to $f'$ we associate the following
product:
\[ 
  \prod_{e=xy\in \Gamma_{f,f'}}\omega_{(x,y)}(t)
\]
where
\[
  \omega_{(x,y)}(t) := \frac{1}{\sqrt{k'}}\mathsf{dn}(t-\alpha_{j(x,y)}),
\]
and $j(x,y)$ is such that $d_{j(x,y)}$ is the endpoint of the train-track
crossed which is on the left of the oriented edge $(x,y)$. This
product does not depend on the choice of path between $f$ and $f'$. Indeed the
product of the multipliers on a path surrounding a rhombus is equal to $1$ using
Table~\ref{table_formula}. Let $f$ and $f'$ be two vertices of
$G^{\lozenge}_{\mathscr{T}}$ that are adjacent to the boundary of $R$. The path
that goes from $f$ to $f'$ going only through boundary vertices exists for any
rhombus tiling (see Figure~\ref{path}). Thus the product associated to it is the
same for any tiling. To a number $p\in[2n]$ we associate the white face $f_p$
touching the boundary between $p$ and $p+1$ if $p$ is even and we associate the
white face $f_p$ touching the boundary between $p-1$ and $p$ if $p$ is odd
(modulo $2n$). Then $\Pi_p(t)$ is equal to the product associated to the path
that goes from $f_p$ to $f_1$.

\begin{figure}[ht!]
  \centering
  \def\svgwidth{8cm}
  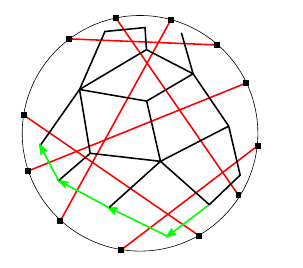
  \caption{A path in the diamond graph going from $f_4$ to $f_1$}\label{path}
\end{figure}

\begin{prop}\label{main_lemma}
  Let $R=(\tau,\bm{\alpha})$ be a non-alternating region and $j \in [2n]$ be a
  $\tau$-descent. Then for $R'=R\cdot t_j$ we have for all $t\in\R$,
  \[ \gamma_R(t) = \gamma_{R'}(t)\cdot g_j^{\bm{\alpha}}, \]
  where $g_j^{\bm{\alpha}}$ is the matrix defined in Definition~\ref{g_p}.
\end{prop}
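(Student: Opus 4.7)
The plan is to verify the matrix equation $\gamma_R(t) = \gamma_{R'}(t) \cdot g_j^{\bm{\alpha}}$ componentwise. Since $g_j^{\bm{\alpha}}$ agrees with the identity outside a $2 \times 2$ block acting on rows/columns $\{j, j+1\}$ (or, for $j = 2n$, a wraparound block at $\{1, 2n\}$), the claim splits naturally into \emph{inactive} coordinates $p \notin \{j, j+1\}$, where I expect the pointwise equality $\gamma_R^p(t) = \gamma_{R'}^p(t)$, and \emph{active} coordinates $p \in \{j, j+1\}$, where a genuine linear combination is required.

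For the inactive coordinates, the prefactor $\Pi_p(t)$ is a product of $\mathsf{dn}$-factors indexed by the even initial segment $\{1, \ldots, 2\lfloor p/2 \rfloor\}$, which contains both of $\{j, j+1\}$ or neither when $p \notin \{j, j+1\}$; hence swapping $\alpha_j \leftrightarrow \alpha_{j+1}$ leaves $\Pi_p$ unchanged. For the remaining $\mathsf{sn}$-product I would reinterpret $\tilde{J}_p$ pair by pair: each $\tau$-pair $\{a, \tau(a)\}$ contributes exactly one factor $\mathsf{sn}(t - \tilde{\alpha}_\bullet)$ whose index in the cyclic lift depends on the position of $p$ relative to $\{a, \tau(a)\}$. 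Only the two pairs $\{j, \tau(j)\}$ and $\{j+1, \tau(j+1)\}$ of $R$ (deformed to $\{j, \tau(j+1)\}$ and $\{j+1, \tau(j)\}$ in $R'$) are affected by the swap, and a case analysis over the four arcs of $[2n]$ cut out by $\{j, j+1, \tau(j), \tau(j+1)\}$ shows that the total contribution to $\gamma_R^p$ matches that to $\gamma_{R'}^p$.

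For the active coordinates, all pairs other than $\{j, \tau(j)\}$ and $\{j+1, \tau(j+1)\}$ contribute a common factor $Q(t)$ identical in $R$ and $R'$, yielding the explicit forms
\[ \gamma_R^j/Q = \Pi_j^R \cdot \mathsf{sn}(t - \alpha_{j+1}), \quad \gamma_R^{j+1}/Q = \Pi_{j+1}^R \cdot \mathsf{sn}(t - \alpha_{\tau(j)}), \]
with the analogous forms for $R'$ obtained by swapping $\alpha_j \leftrightarrow \alpha_{j+1}$ and $\tau(j) \leftrightarrow \tau(j+1)$. Using $\alpha_{\tau(j)} = \alpha_j + \pi/2$ with the quarter-period shift $\mathsf{sn}(u - \pi/2) = -\mathsf{cn}(u)/\mathsf{dn}(u)$, and the parity-dependent ratio $\Pi_{j+1}/\Pi_j$, the $2 \times 2$ relation reduces (in the variables $u = t - \alpha_j$, $v = t - \alpha_{j+1}$) to one of two Jacobi elliptic identities:
\[ \mathsf{dn}(u-v)\mathsf{sn}(u) - \mathsf{cn}(u-v)\mathsf{sn}(v) = \mathsf{sn}(u-v)\mathsf{dn}(u)\mathsf{cn}(v) \quad (j \text{ odd}), \]
\[ \mathsf{sn}(u)\mathsf{dn}(v) - \mathsf{cn}(u-v)\mathsf{dn}(u)\mathsf{sn}(v) = \mathsf{sn}(u-v)\mathsf{cn}(v) \quad (j \text{ even}). \]
After dividing by $\mathsf{dn}(\alpha_{j+1} - \alpha_j)$ and invoking the dualities $\mathsf{sn}^* = k' \mathsf{sd}$, $\mathsf{cn}^* = \mathsf{cd}$, the coefficients become $1/c_j^*, s_j^*/c_j^*$ (odd $j$) or $1/c_j, s_j/c_j$ (even $j$), exactly reproducing the block $B_j^{\bm{\alpha}}$. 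The wraparound case $j = 2n$ reduces to the local case through the quasi-periodicity $\gamma_{p + 2n}(t) = (-1)^{n-1} \gamma_p(t)$.

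The main obstacle is the active-coordinate computation: tracking which elements of $\tilde{J}_p^R$ and $\tilde{J}_p^{R'}$ differ, isolating the shared factor $Q(t)$, and recognizing the two elliptic identities. The subtlety is that $\tilde{J}_p$ can contain indices outside $[2n]$, producing signs via $\tilde{\alpha}_{k + 2n} = \tilde{\alpha}_k + \pi$ and $\mathsf{sn}(u + \pi) = -\mathsf{sn}(u)$, so the pair-by-pair accounting must be done carefully with signs. Once the bookkeeping is settled, the elliptic identities themselves follow from the classical addition formulas together with $\mathsf{dn}^2 = 1 - k^2 \mathsf{sn}^2$.
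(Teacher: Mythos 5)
Your proposal is correct and follows essentially the same route as the paper: split into inactive and active coordinates, factor out the common part $\Phi(t)$, use $\alpha_{\tau(j)}=\alpha_j+\tfrac{\pi}{2}$ with the quarter-period table to rewrite $\gamma'_{j+1}$ as a $\mathsf{cd}$, reduce the even case to rescaled Jacobi addition formulas, and pass to the dual functions $\mathsf{sn}^*,\mathsf{dn}^*$ for odd $j$. The only undercount is that each parity case requires \emph{two} identities, one per active coordinate: your displayed identities settle coordinate $j$ (they are rearrangements of~\eqref{sn_formula}), but coordinate $j+1$ needs the companion formula~\eqref{cn_formula}, namely $\mathsf{cn}(u+v)=\mathsf{cn}(u)\mathsf{cn}(v)-\mathsf{sn}(u)\mathsf{sn}(v)\mathsf{dn}(u+v)$, which is equally routine and fits your framework without change.
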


\begin{proof}
  In this proof, we consider the tilded version of $\tau, \alpha$ and $J_p$ even
  if the tilde notation is omitted. It allows us to treat the case $j=2n$ with the
  same notation as the other cases $j< 2n$. For the region $R'$, write
  $\gamma_{R'}(t)=(\gamma'_1(t),\ldots,\gamma'_{2n}(t))$ and, for $m\in [2n]$,
  denote by $J'_{m}$ the set obtained from $\tau'$. By comparing $J_m$ and $J'_m$
  for all $m$ different from $j,j+1$ we see that $\gamma_m(t)=\gamma'_m(t)$. So it
  remains to consider the case $m=j$ and the case $m=j+1$.

  Note that
  \[
    J_j\setminus\{j+1\}=J_{j+1}\setminus\{\tau(j)\} \text{ and }
    J'_j\setminus\{j+1\}=J'_{j+1}\setminus\{\tau'(j)\}.
  \]

  Suppose first that $j$ is even.

  We have
\[ (\gamma_{R'}(t)\cdot g_j^{\bm{\alpha}})_j =
  \gamma'_j(t)\frac{1}{\mathsf{cn}(\alpha_{j+1}-\alpha_j)}+\gamma'_{j+1}(t)\frac{\mathsf{sn}(\alpha_{j+1}-\alpha_j)}{\mathsf{cn}(\alpha_{j+1}-\alpha_j)}
\]
  and 
  \[ (\gamma_{R'}(t)\cdot g_j^{\bm{\alpha}})_{j+1} = \gamma'_j(t)\frac{\mathsf{sn}(\alpha_{j+1}-\alpha_j)}{\mathsf{cn}(\alpha_{j+1}-\alpha_j)}+\gamma'_{j+1}(t)\frac{1}{\mathsf{cn}(\alpha_{j+1}-\alpha_j)}. \]

  Since $j$ is even, we have $\Pi_j(t)=\Pi_{j+1}(t)$ so, by (\ref{gamma}),
  $\gamma'_j(t)=\Phi(t)\mathsf{sn}(t-\alpha'_{j+1})$ and
  $\gamma'_{j+1}(t)=\Phi(t)\mathsf{sn}(t-\alpha'_{\tau'(j)})$ where 
  \[ \Phi(t):=\prod_{l\in J'_j\setminus
    \{j+1\}}\Pi_j(t)\mathsf{sn}(t-\alpha'_l)
  \] 
  is the common factor.
  Recall that, by Condition~\eqref{cond_1} on the angles and
  Definition~\ref{region_remove} of~$R'$, the identities
  $\alpha'_{\tau'(j)}=\alpha_{\tau(j+1)}=\alpha_{j+1}+\frac{\pi}{2}$ and
  $\alpha'_{j+1}=\alpha_j$ are satisfied. Therefore
  \[
    \gamma'_{j+1}(t)=-\Phi(t)\mathsf{cd}(t-\alpha_{j+1})
  \]
  where
  $\mathsf{cd}(u):=\frac{\mathsf{cn}(u)}{\mathsf{sn}(u)}$ for all $u\in\R$. We
  also have 
  \[
    \gamma'_j(t)=\Phi(t)\mathsf{sn}(t-\alpha_j).
  \]
  We deduce that
  \begin{multline*}
    (\gamma_{R'}(t)\cdot g_j^{\bm{\alpha}})_j  = 
    \frac{\Phi(t)}{\mathsf{cn}(\alpha_{j+1}-\alpha_j)}
    \\ \times\Bigl(\mathsf{sn}\bigl[(t-\alpha_{j+1})+(\alpha_{j+1}-\alpha_j)\bigr]
    - \mathsf{cd}(t-\alpha_{j+1})\mathsf{sn}(\alpha_{j+1}-\alpha_j)\Bigr).
  \end{multline*}
  Now using Relation~\eqref{sn_formula} 
  \begin{equation*}
    \dn(u)\sn(u+v)= \cn(u)\sn(v)+\sn(u)\cn(v)\dn(u+v),
  \end{equation*}
  we get
  \begin{multline*}
    (\gamma_{R'}(t)\cdot g_j^{\bm{\alpha}})_j =
    \frac{\Phi(t)}{\mathsf{cn}(\alpha_{j+1}-\alpha_j)} 
    \Bigl(\frac{\mathsf{cn}(t-\alpha_{j+1})\mathsf{sn}(\alpha_{j+1}-\alpha_j)}{\mathsf{dn}(t-\alpha_{j+1})} \\
      +\frac{\mathsf{sn}(t-\alpha_{j+1})\mathsf{cn}(\alpha_{j+1}-\alpha_j)\mathsf{dn}(t-\alpha_j)}{\mathsf{dn}(t-\alpha_{j+1})} \Bigr.
    \Bigl.-\mathsf{cd}(t-\alpha_{j+1})\mathsf{sn}(\alpha_{j+1}-\alpha_j)\Bigr),
  \end{multline*}
  thus
  \[ (\gamma_{R'}(t)\cdot g_j^{\bm{\alpha}})_j = \Phi(t)\mathsf{sn}(t-\alpha_{j+1})\frac{\mathsf{dn}(t-\alpha_j)}{\mathsf{dn}(t-\alpha_{j+1})}. \]
  Injecting the expression of $\Phi(t)$ and reorganizing the terms we have
  \begin{multline*}
    (\gamma_{R'}(t)\cdot g_j^{\bm{\alpha}})_j = \Bigl(\prod_{l\in J'_j\setminus\{j+1\}}\mathsf{sn}(t-\alpha'_{l})\Bigr)\mathsf{sn}(t-\alpha_{j+1}) \\
    \times\Bigl(\prod_{l=1}^{\lfloor \frac{j}{2}\rfloor-1}\frac{\mathsf{dn}(t-\alpha'_{2l})}{\mathsf{dn}(t-\alpha'_{\tau'(2l-1)})} \Bigr)\frac{\mathsf{dn}(t-\alpha'_j)}{\mathsf{dn}(t-\alpha'_{\tau'(j-1)})}\frac{\mathsf{dn}(t-\alpha_j)}{\mathsf{dn}(t-\alpha_{j+1})}.
  \end{multline*}
  Since for all $l\in J'_j\setminus \{j+1\}$ we have $\alpha'_l=\alpha_l$ and
  since $J'_j=J_j$, it gives
  \[ \Bigl(\prod_{l\in J'_j\setminus\{j+1\}}\mathsf{sn}(t-\alpha'_{l})\Bigr)\mathsf{sn}(t-\alpha_{j+1})
  = \prod_{l\in J_j}\mathsf{sn}(t-\alpha_l). \]
  Similarly, for all $1\leq l \leq \lfloor \frac{j}{2} \rfloor -1$, we have
  $\alpha'_{2l}=\alpha_{2l}$ and $\alpha'_{\tau'(2l-1)}=\alpha_{\tau(2l-1)}$. We
  also have $\alpha'_j=\alpha_{j+1}$ and
  $\alpha'_{\tau'(j-1)}=\alpha_{\tau(j-1)}$ so
  \begin{equation} \left(\prod_{l=1}^{\lfloor \frac{j}{2}\rfloor-1}\frac{\mathsf{dn}(t-\alpha'_{2l})}{\mathsf{dn}(t-\alpha'_{\tau'(2l-1)})} \right)\frac{\mathsf{dn}(t-\alpha'_j)}{\mathsf{dn}(t-\alpha'_{\tau'(j-1)})}\frac{\mathsf{dn}(t-\alpha_j)}{\mathsf{dn}(t-\alpha_{j+1})} = \prod_{l=1}^{\lfloor \frac{j}{2} \rfloor} \frac{\mathsf{dn}(t-\alpha_{2l})}{\mathsf{dn}(t-\alpha_{\tau(2l-1)})}.
    \label{bigequa} 
  \end{equation}
  Finally we get
  \[
    (\gamma_{R'}(t)\cdot g_j^{\bm{\alpha}})_j = \prod_{l\in
    J_j}\mathsf{sn}(t-\alpha_l)\prod_{l=1}^{\lfloor \frac{j}{2} \rfloor}
    \frac{\mathsf{dn}(t-\alpha_{2l})}{\mathsf{dn}(t-\alpha_{\tau(2l-1)})}=
    \gamma_j(t).
  \]

Now we turn to the coefficient $j+1$. Factorizing by $\Phi(t)$ we have
\begin{multline*} (\gamma_{R'}(t)\cdot g_j^{\bm{\alpha}})_{j+1} = \frac{\Phi(t)}{\mathsf{cn}(\alpha_{j+1}-\alpha_j)}\Bigl(\mathsf{sn}(t-\alpha_j)\mathsf{sn}(\alpha_{j+1}-\alpha_j)\\-\mathsf{cd}\bigl[(t-\alpha_j)-(\alpha_{j+1}-\alpha_j)\bigr]\Bigr).
\end{multline*}
Then we use the following relation~\eqref{cn_formula}
\[
  \cn(u+v)=\cn(u)\cn(v) - \sn(u)\sn(v)\dn(u+v)
\]
 to get
\begin{multline*}
(\gamma_{R'}(t)\cdot g_j^{\bm{\alpha}})_{j+1} = \frac{\Phi(t)}{\mathsf{cn}(\alpha_{j+1}-\alpha_j)}\Bigl(\mathsf{sn}(t-\alpha_j)\mathsf{sn}(\alpha_{j+1}-\alpha_j)-\frac{\mathsf{cn}(t-\alpha_j)\mathsf{cn}(\alpha_{j+1}-\alpha_j)}{\mathsf{dn}(t-\alpha_{j+1})}\Bigr.  \\
\Bigl.-\frac{\mathsf{sn}(t-\alpha_j)\mathsf{sn}(\alpha_{j+1}-\alpha_j)\mathsf{dn}(t-\alpha_{j+1})}{\mathsf{dn}(t-\alpha_{j+1})}\Bigr)
\end{multline*}
Thus
\[ (\gamma_{R'}(t)\cdot g_j^{\bm{\alpha}})_{j+1} = \Phi(t)(-\mathsf{cd}(t-\alpha_j))\frac{\mathsf{dn}(t-\alpha_j)}{\mathsf{dn}(t-\alpha_{j+1})} = \Phi(t)\mathsf{sn}(t-\alpha_{\tau(j)})\frac{\mathsf{dn}(t-\alpha_j)}{\mathsf{dn}(t-\alpha_{j+1})}. \]
Reinjecting the expression of $\Phi(t)$ and reorganizing the terms it gives
\begin{multline*}
 (\gamma_{R'}(t)\cdot g_j^{\bm{\alpha}})_{j+1} =
 \left(\prod_{l\in J'_j\setminus\{j+1\}}\mathsf{sn}(t-\alpha'_{l})\right)
 \mathsf{sn}(t-\alpha_{\tau(j)}) \\
 \times
 \left(\prod_{l=1}^{\lfloor \frac{j}{2}\rfloor-1}\frac{\mathsf{dn}(t-\alpha'_{2l})}{\mathsf{dn}(t-\alpha'_{\tau'(2l-1)})} \right)\frac{\mathsf{dn}(t-\alpha'_j)}{\mathsf{dn}(t-\alpha'_{\tau'(j-1)})}\frac{\mathsf{dn}(t-\alpha_j)}{\mathsf{dn}(t-\alpha_{j+1})}.
\end{multline*}
Since $J_{p+1}=J'_p\setminus \{ p+1 \} \cup \{\tau(p)\}$ we have
\[ \left(\prod_{j\in J'_p\setminus\{p+1\}}\mathsf{sn}(t-\alpha'_{j})\right)\mathsf{sn}(t-\alpha_{\tau(p)})
= \prod_{j\in J_{p+1}}\mathsf{sn}(t-\alpha_j).\]
Recalling~\eqref{bigequa} and since $\lfloor \frac{j}{2} \rfloor= \lfloor
\frac{j+1}{2} \rfloor$ for $j$ even we finally deduce that
\[ (\gamma_{R'}(t)\cdot g_j^{\bm{\alpha}})_{j+1} = \prod_{l\in J_{j+1}}\mathsf{sn}(t-\alpha_l)\prod_{l=1}^{\lfloor \frac{j+1}{2} \rfloor} \frac{\mathsf{dn}(t-\alpha_{2l})}{\mathsf{dn}(t-\alpha_{\tau(2l-1)})}= \gamma_{j+1}(t). \]

Suppose now that $j$ is odd. The idea is the same but the common factor between
$\gamma'_j(t)$ and $\gamma'_{j+1}(t)$ is different. Indeed we have,
\[
  \gamma'_j(t)=\Phi(t)\mathsf{sn}(t-\alpha'_{j+1})=
\Phi(t)\mathsf{sn}^*(t-\alpha'_{j+1})\mathsf{dn}^*(t-\alpha'_{\tau'(j+1)}).
\]
And using consecutively Table~\ref{table_formula} and the dual relation
\eqref{dual_dn} we have
\begin{align*}
\gamma'_{j+1}(t)& =\Phi(t)\mathsf{sn}(t-\alpha'_{\tau'(j)})\frac{\mathsf{dn}(t-\alpha'_{j+1})}{\mathsf{dn}(t-\alpha'_{\tau'(j)})} \\
	& = \Phi(t)\mathsf{sn}^*(t-\alpha'_{\tau'(j)})\frac{1}{k'}\mathsf{dn}(t-\alpha'_{j+1}) \\
	& = \Phi(t)\mathsf{sn}^*(t-\alpha'_{\tau'(j)})\mathsf{dn}^*(t-\alpha'_{\tau'(j+1)}) .
\end{align*}
Thus $\gamma'_j(t)=\tilde{\Phi}(t)\mathsf{sn}^*(t-\alpha'_{j+1})$ and
$\gamma'_{j+1}(t)=\tilde{\Phi}(t)\mathsf{sn}^*(t-\alpha'_{\tau'(j)})$ with
$\tilde{\Phi}(t)=\Phi(t)\mathsf{dn}^*(t-\alpha'_{\tau'(j+1)})$ is the common
factor between $\gamma'_j(t)$ and $\gamma'_{j+1}(t)$. A similar computation as
above with $\mathsf{sn}^*$ instead of $\mathsf{sn}$ therefore gives
\[ (\gamma_{R'}(t)\cdot g_j^{\bm{\alpha}})_j = \tilde{\Phi}(t)\mathsf{sn}^*(t-\alpha_{j+1})\frac{\mathsf{dn}^*(t-\alpha_j)}{\mathsf{dn}^*(t-\alpha_{j+1})}. \]
Rewriting these without $*$, using the duality relations we have
\[ (\gamma_{R'}(t)\cdot g_j^{\bm{\alpha}})_j = \Phi(t) \mathsf{sn}(t-\alpha_{j+1})\frac{\mathsf{dn}(t-\alpha_{\tau(j+1)})}{\mathsf{dn}(t-\alpha'_{\tau'(j+1)})}\frac{\mathsf{dn}(t-\alpha_{j+1})}{\mathsf{dn}(t-\alpha_j)}. \]
Since $\alpha'_{\tau'(j+1)}=\alpha_{\tau(j)}$ we deduce that
\[ \frac{\mathsf{dn}(t-\alpha_{\tau(j+1)})}{\mathsf{dn}(t-\alpha'_{\tau'(j+1)})}\frac{\mathsf{dn}(t-\alpha_{j+1})}{\mathsf{dn}(t-\alpha_j)} = 1.\]
Finally we get
\[ (\gamma_{R'}(t)\cdot g_j^{\bm{\alpha}})_j= \prod_{l\in J_j}\mathsf{sn}(t-\alpha_l)\prod_{l=1}^{\lfloor \frac{j}{2} \rfloor} \frac{\mathsf{dn}(t-\alpha_{2l})}{\mathsf{dn}(t-\alpha_{\tau(2l-1)})} = \gamma_j(t). \]

Now we turn to the coefficient $j+1$. By the above computations we have
\[ (\gamma_{R'}(t)\cdot g_j^{\bm{\alpha}})_{j+1} = \tilde{\Phi}(t)\mathsf{sn}^*(t-\alpha_{\tau(j)})\frac{\mathsf{dn}^*(t-\alpha_j)}{\mathsf{dn}^*(t-\alpha_{j+1})}. \]
Rewriting these using the duality relations we have
\[ (\gamma_{R'}(t)\cdot g_j^{\bm{\alpha}})_{j+1} = \Phi(t)\mathsf{sn}(t-\alpha_{\tau(j)})\frac{\mathsf{dn}(t-\alpha_{j})}{\mathsf{dn}(t-\alpha'_{\tau'(j+1)})}\frac{\mathsf{dn}(t-\alpha_{j+1})}{\mathsf{dn}(t-\alpha_j)}.\]
Since $\alpha'_{\tau'(j+1)}=\alpha_{\tau(j)}$ we deduce that
\[ (\gamma_{R'}(t)\cdot g_j^{\bm{\alpha}})_{j+1} = \prod_{l\in J_j}\mathsf{sn}(t-\alpha_l)\prod_{l=1}^{\lfloor \frac{j+1}{2} \rfloor} \frac{\mathsf{dn}(t-\alpha_{2l})}{\mathsf{dn}(t-\alpha_{\tau(2l-1)})} = \gamma_{j+1}(t) \qedhere. \]

\end{proof}

Now we can turn to the proof of the main theorem:

\begin{teo*}
Let $R=(\tau,\bm{\alpha})$ be a region and $j\in[2n]$ be a $\tau$-descent. Then for $R':=R\cdot t_j$, we have
\[ \phi(M_R)= \phi(M_{R'})\cdot g_j^{\bm{\alpha}} \]
\end{teo*}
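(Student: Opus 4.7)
The statement is identical to Theorem~\ref{theo_phi}, cited earlier from \cite{galashin2020ising}, so strictly the proof reduces to invoking that general result. What is substantive, however, is verifying that the $Z$-invariant specialization of the Galashin--Pylyavskyy transformation matrix coincides with the matrix $g_j^{\bm{\alpha}}$ made explicit in Definition~\ref{g_p}. My plan is therefore to present the proof as the dictionary between the two parametrizations, and then to check the $j=2n$ corner case by hand.

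The general GP block acts on the two rows/columns indexed by the descent and has entries $1/c_e$ and $s_e/c_e$, with $s_e := (\cosh(2j_e))^{-1}$ and $c_e := \tanh(2j_e)$, where $e$ is the unique edge of $G_{\mathscr{T}}^\bullet$ coming from the intersection of the two traintracks that terminate at $d_j$ and $d_{j+1}$. Substituting the $Z$-invariant weight of Equation~\eqref{Z_invariant_weights} and simplifying gives the clean pair $s_e = \mathsf{cn}(\theta_e)$ and $c_e = \mathsf{sn}(\theta_e)$. It then remains to read $\theta_e$ off the boundary angles. As explained in Section~\ref{section_invariant} and Figure~\ref{def_theta}, this reading depends on the parity of $j$: for $j$ even one gets $\theta_e = \alpha_{j+1}-\alpha_j$ directly, hence $s_e = \mathsf{cn}(\alpha_{j+1}-\alpha_j)$ and $c_e = \mathsf{sn}(\alpha_{j+1}-\alpha_j)$; for $j$ odd one has instead $\theta_e = \tfrac{\pi}{2}-(\alpha_{j+1}-\alpha_j)$, and the quarter-period identities in Table~\ref{table_formula} together with the duality formulas~\eqref{dual_sn}--\eqref{dual_cn} rewrite $s_e$ and $c_e$ as $\mathsf{sn}^*(\alpha_{j+1}-\alpha_j)$ and $\mathsf{cn}^*(\alpha_{j+1}-\alpha_j)$ respectively. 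Both cases produce exactly the entries of $B_j^{\bm{\alpha}}$ in Definition~\ref{g_p}.

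The only genuine subtlety is the case $j=2n$, where the transposition wraps around the cyclic order on the boundary. Here the relevant $2\times 2$ block of $g_{2n}^{\bm{\alpha}}$ is supported on the corner indices $\{1,2n\}$ rather than adjacent indices, and an additional sign $(-1)^{n-1}$ appears in the off-diagonal entries. This sign tracks the parity of the wrap-around picked up by the lift $\tilde{\alpha}_{k+2n}=\tilde{\alpha}_k+\pi$ and is consistent with the identity $\gamma_{p+2n}=(-1)^{n-1}\gamma_p$ recorded in the preliminaries. The main obstacle throughout is not conceptual but combinatorial and computational bookkeeping: getting the parity convention and the wrap-around sign right uniformly across configurations, and handling the elliptic-function duality cleanly when translating the odd-$j$ case into the dual parametrization.
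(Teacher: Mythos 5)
Your reading of the statement is defensible: as printed it coincides verbatim with Theorem~\ref{theo_phi}, which the paper itself does not prove but imports from~\cite{galashin2020ising}; note, however, that the proof the paper attaches at this point is actually the proof of Theorem~\ref{main_teo} (the induction on crossings combining Proposition~\ref{main_lemma} with the cited transfer identity), so your proposal is a specialization check rather than a reconstruction of the argument printed there. Taken on its own terms, the check has a concrete error in the even case. You posit a single uniform Galashin--Pylyavskyy block with entries $1/c_e$ and $s_e/c_e$, where $s_e=(\cosh 2j_e)^{-1}$, $c_e=\tanh 2j_e$, and you claim both parities reproduce $B_j^{\bm{\alpha}}$. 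For $j$ even the computation you quote gives $s_e=\mathsf{cn}(\alpha_{j+1}-\alpha_j)=c_j$ and $c_e=\mathsf{sn}(\alpha_{j+1}-\alpha_j)=s_j$, so your block would be
\[
\begin{pmatrix} 1/s_j & c_j/s_j \\ c_j/s_j & 1/s_j\end{pmatrix},
\quad\text{whereas Definition~\ref{g_p} requires}\quad
\begin{pmatrix} 1/c_j & s_j/c_j \\ s_j/c_j & 1/c_j\end{pmatrix},
\]
and these differ. The missing idea is that the type of generator depends on the parity of $j$: by the coloring convention the boundary face between $d_j$ and $d_{j+1}$ is black for $j$ odd and white for $j$ even, so removing the descent is a move on an edge of $G^{\bullet}_{\mathscr{T}}$ in one parity and on a dual edge in the other. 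By Kramers--Wannier duality the dual edge has its parameters swapped, $s_{e^*}=c_e$ and $c_{e^*}=s_e$, and only with the dual coupling inserted into the block does the even case land on $B_j^{\bm{\alpha}}$. As written, your dictionary produces the wrong matrix in half the cases, so the claim ``both cases produce exactly the entries of $B_j^{\bm{\alpha}}$'' is unjustified.

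A smaller point: for $j=2n$ you assert the off-diagonal sign $(-1)^{n-1}$ by appealing to the identity $\gamma_{p+2n}=(-1)^{n-1}\gamma_p$, but that identity concerns the curve $\gamma_R$, which lives on the other side of the main theorem and cannot be used to certify the entries of $g_{2n}^{\bm{\alpha}}$ in a proof of this statement; the wrap-around generator must be matched against the corresponding cyclic generator in~\cite{galashin2020ising} directly, which your sketch does not do.
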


\begin{proof}[Proof of Theorem~\ref{main_teo}]
As in the paper~\cite{galashin2021critical}, we proceed by induction. Our proof
only differs in the induction step since we are careful with the fact that a
crossing does not imply the existence of a $\tau$-descent, see
Remark~\ref{no_tau_descent}. We proceed by induction on the number $\ell$ of
crossings.

For $\ell=0$, let $R$ be a region without crossing. Since there is no crossing,
there is always an index $p$ such that $\tau(p)=p+1$. Let us show that
$\Span(\gamma_R)$ contains the vector $e_p+\varepsilon_{p,\tau(p)}e_{\tau(p)}$
where $\varepsilon_{p,\tau(p)}=(-1)^{\frac{\tau(p)-p-1}{2}}$ and
$e_1,...,e_{2n}$ is the canonical basis of $\R^{2n}$.

By the non-alternating condition we have that
$\gamma_p(\alpha_p),\gamma_{\tau(p)}(\alpha_p)\neq 0$ while
$\gamma_m(\alpha_p)=0$ for all $m\neq p,\tau(p)$. Note that $J_p=J_{\tau(p)}$
and $\Pi_p=\Pi_{\tau(p)}$ so
$\gamma_p(t)=\varepsilon_{p,\tau(p)}\gamma_{\tau(p)}(t)$. Thus
$\gamma(\alpha_p)$ is proportional to $e_p+\varepsilon_{p,\tau(p)}e_{\tau(p)}$
which shows the result.

Now we can modify the curve $\gamma_R$ as follows. Define
$\gamma_R^{[p,\tau(p)]}$ from $\gamma_R$ by setting the $p$-th and the
$\tau(p)$-th coordinates to $0$ and dividing the other coordinates by
$\sin(t-\alpha_p)$ (which is a common factor of all $\gamma_m(t)$ where $m\neq
p,\tau(p)$). Let us show that
\[
  \Span(\gamma_R)=\Span(\gamma_R^{[p,\tau(p)]})
\oplus \Span(e_p+\varepsilon_{p,\tau(p)}e_{\tau(p)}).
\]
The first inclusion is clear since we have
\[ \gamma_R(t)=\mathsf{sn}(t-\alpha_p)\gamma_R^{[p,\tau(p)]}(t)+\prod_{j\in J_p}\mathsf{sn}(t-\alpha_j)\Pi_p(e_p+\varepsilon_{p,\tau(p)}). \]
For the other inclusion it suffices to note that
$\gamma_R(\alpha_p)=c(e_p+\varepsilon_{p,\tau(p)})$ where $c$ is a constant and
that
\[ \gamma_R^{[p,\tau(p)]}(t) = \frac{1}{\mathsf{sn}(t-\alpha_p)}\gamma_R(t)-\prod_{j\in J_p}\mathsf{sn}(t-\alpha_j)\Pi_p\frac{1}{c}\gamma_R(\alpha_p). \]

Now notice that $\Span(\gamma_R^{[p,\tau(p)]})$ with two of its coordinates set
to $0$ is the same as $\Span(\gamma_{\tilde{R}})$  where $\tilde{R}$ is the
region obtained from $R$ by removing the arc ${p,\tau(p)}$. Indeed the product
$\Pi_m$ associated to $m$ for all $m$ not equal to $p,\tau(p)$ is the same in
$R$ and in $\tilde{R}$. This can be seen for instance using the path notation of
the product $\Pi_m$ explained in Remark~\ref{remark_formula} and using the fact
that we can find a path that does not cross the traintrack linking $d_p$ to
$d_{\tau(p)}$.
Therefore we can repeat the process for the region $\tilde{R}$ and eventually
decompose $\Span(\gamma_R)$ as a direct sum of
$\Span(e_p+\varepsilon_{p,\tau(p)}e_{\tau(p)})$ over all pairs $p<\tau(p)$. By
definition of $\phi(M_R)$ as the span of the matrix $\tilde{M}$, this direct
sum is equal to $\phi(M_R)$. It also proves that for $\ell=0$, $\Span(\gamma_R)$
is of dimension $n$. Since for all $j\in[2n]$ and for all $\bm{\alpha}\in
\R^{2n}$, the matrix $g_j^{\alpha}$ is invertible, we can conclude that
$\Span(\gamma_R)$ is of dimension $n$ for all $\ell\geq 0$.

Now for the induction step, suppose that the result is true for $\ell$
crossings. Let $R$ be a region with $\ell+1$ crossings. Let $j<k$  be two
indices of $[2n]$ such that the traintrack linking $d_j$ and $d_{\tau(j)}$
crosses the traintrack linking $d_k$ and $d_{\tau(k)}$. Assume the difference
$k-j$ is minimal in the sense that no other $k,j$ with a smaller difference
satisfy the above condition.
\begin{itemize}
\item If $j$ is a $\tau$-descent (\emph{i.e.} $k=j+1$). Then let $R'=R\cdot
  t_j$. $R'$ has $\ell$ crossings so by the induction hypothesis we have
  $\Span(\gamma_{R'})=\phi(M_{R'})$. Since $\phi(M_R)=\phi(M_{R'})\cdot
  g_j^{\alpha}$ by Theorem~\ref{theo_phi} and
  $\Span(\gamma_{R})=\Span(\gamma_{R'})\cdot g_j^{\alpha}$ by Proposition
 ~\ref{main_lemma} we deduce that $\phi(M_R)=\Span(\gamma_R)$.
\item If $j$ is not a $\tau$-descent, we transform the region $R$ into a region
  $\hat{R}$ such that $j$ is a $\tau$-descent in $\hat{R}$ as follows. First
  note that, by minimality of the difference $k-j$, it means that there is no
  crossing in the area between $j+1$ and $k-1$, see Figure~\ref{algo_descent}. 

\begin{figure}[ht]
\centering
\def\svgwidth{10cm}
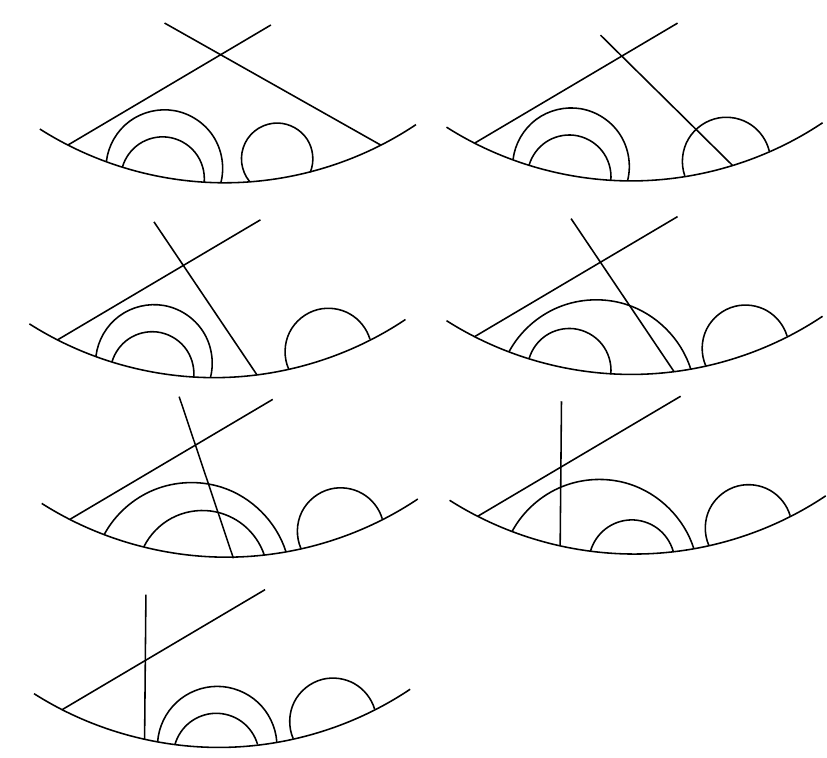
\caption{Procedure to make $j$ into a $\tau$-descent (from left to right and top to bottom)}
\label{algo_descent}
\end{figure}

Define, by induction, a sequence of regions $(R_i)_{0\leq i\leq
k-(j+1)}=(\tau_i,\bm{\alpha}_i)$ by $R_0:=R$ and for $i\geq 1$:
\begin{itemize}
  \item If $k-i$ is a $\tau_{i-1}$-descent, then $R_{i}=R_{i-1}\cdot t_{k-i}$.
  \item Otherwise, set $R_{i}$ to be the region obtained by crossing $k-i$ and $k-(i-1)$.
\end{itemize}
The region $\hat{R}:=R_{k-(j+1)}$ is such that $j$ is a $\hat{\tau}$-descent.
And we have $\phi(M_R)=\phi(M_{\tilde{R}})\cdot g$ and
$\Span(\gamma_R)=\Span(\gamma_{\hat{R}})\cdot g$ where $g$ is an invertible
matrix. By the first step applied to $\hat{R}$ we know that
$\Span(\gamma_{\hat{R}})=\phi(M_{\hat{R}})$ so we deduce the result for $R$.
\qedhere
\end{itemize}
\end{proof}

We end with a proof of Proposition~\ref{prop_basis}.

\begin{proof}[Proof of Proposition~\ref{prop_basis}]
Let $A$ be the matrix with rows
$\gamma_R(\alpha_{j_1}),\dotsc,\gamma_R(\alpha_{j_n})$. Note that, since all
angles are distinct, we have $\gamma_{j_l}(\alpha_{j_l})\neq 0$ for all $l\in
[n]$. Therefore, the principal minor of $A$ composed by the $n$ first columns of
$A$ is an upper-triangular matrix with non-zero diagonal coefficients. It is
thus invertible which implies that $A$ is of rank $n$.
\end{proof}


\subsection{Alternating regions}
\label{alternating_regions}

In this section we extend our result to alternating regions. 

\begin{defi}
Let $R=(\tau,\bm{\alpha})$ and $v_j=e^{i2\alpha_j}$ for all $j\in [2n]$. An
\emph{alternating region} is a region such that there exist integers
$1\leq i<j<k<l\leq 2n$ such that $v_i=-v_j=v_k=-v_l$.
\end{defi}

We adapt the ideas of~\cite{galashin2022formula} to our case.  Let
$R=(\tau,\bm{\alpha})$ be a region. For $p\in [2n]$, define $m_p:=\#\{j\in J_p :
v_j=v_p\}$. For $p\in [2n]$, let $\supp_{\tau}(p)$ be the set of indices that we
encounter by going from $p$ to $\tau(p)$ in the trigonometric direction, that is
\[ \supp_{\tau}(p):=\left\{
    \begin{array}{ll}
        \{p,p+1,\dotsc,\tau(p)\} & \mbox{if } p<\tau(p) \\
        \{p,p+1,\dotsc,n,1,\dotsc,\tau(p)\} & \mbox{if } p > \tau(p).
    \end{array}
\right.
\]
For a vector $x\in \C^{2n}$ and a set $S\subset [2n]$, define the vector
$x_{|S}$ to be the vector obtained from $x$ by

\[
(x_{|S})_p := \left\{
    \begin{array}{ll}
        x_p & \mbox{if } p \in S \\
        0 & \mbox{if } p \notin S.
    \end{array}
\right.
\]
Finally, for $p\in [2n]$, define
\[ u_R^{(p)}:=\gamma_R^{(m_p)}(\alpha_p)_{|\supp_{\tau}(p)} \]
where $\gamma_R^{(m_p)}$ is the $m_p$-th derivative of $\gamma_R$.
\begin{figure}[ht!]
\centering
\def\svgwidth{6cm}
\begingroup%
  \makeatletter%
  \providecommand\color[2][]{%
    \errmessage{(Inkscape) Color is used for the text in Inkscape, but the package 'color.sty' is not loaded}%
    \renewcommand\color[2][]{}%
  }%
  \providecommand\transparent[1]{%
    \errmessage{(Inkscape) Transparency is used (non-zero) for the text in Inkscape, but the package 'transparent.sty' is not loaded}%
    \renewcommand\transparent[1]{}%
  }%
  \providecommand\rotatebox[2]{#2}%
  \newcommand*\fsize{\dimexpr\f@size pt\relax}%
  \newcommand*\lineheight[1]{\fontsize{\fsize}{#1\fsize}\selectfont}%
  \ifx\svgwidth\undefined%
    \setlength{\unitlength}{150.20162591bp}%
    \ifx\svgscale\undefined%
      \relax%
    \else%
      \setlength{\unitlength}{\unitlength * \real{\svgscale}}%
    \fi%
  \else%
    \setlength{\unitlength}{\svgwidth}%
  \fi%
  \global\let\svgwidth\undefined%
  \global\let\svgscale\undefined%
  \makeatother%
  \begin{picture}(1,1.00577967)%
    \lineheight{1}%
    \setlength\tabcolsep{0pt}%
    \put(0,0){\includegraphics[width=\unitlength,page=1]{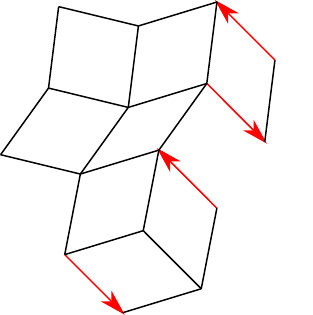}}%
    \put(0.18975972,0.06411899){\color[rgb]{0,0,0}\makebox(0,0)[lt]{\lineheight{1.25}\smash{\begin{tabular}[t]{l}$v_i$\end{tabular}}}}%
    \put(0.61529578,0.43008757){\color[rgb]{0,0,0}\makebox(0,0)[lt]{\lineheight{1.25}\smash{\begin{tabular}[t]{l}$v_j$\end{tabular}}}}%
    \put(0.66796311,0.60629375){\color[rgb]{0,0,0}\makebox(0,0)[lt]{\lineheight{1.25}\smash{\begin{tabular}[t]{l}$v_k$\end{tabular}}}}%
    \put(0.78617181,0.92733688){\color[rgb]{0,0,0}\makebox(0,0)[lt]{\lineheight{1.25}\smash{\begin{tabular}[t]{l}$v_l$\end{tabular}}}}%
  \end{picture}%
\endgroup%

\caption{Example of an alternating region}
\label{alternating_region}
\end{figure}

\begin{prop}
Let $R=(\tau,\bm{\alpha})$ be a region. The $n$ vectors $\{u_R^{(j)}:j\in J_1\cup\{1\}\}$ form a basis of $\phi(M_R)$.
\end{prop}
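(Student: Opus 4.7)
The plan is to first extend Theorem~\ref{main_teo} from non-alternating to alternating regions, then to establish linear independence of the family $\{u_R^{(p)}\}_{p\in J_1\cup\{1\}}$, and finally to prove that each $u_R^{(p)}$ lies in $\phi(M_R)$. Combined with $|J_1\cup\{1\}|=n=\dim\phi(M_R)$, this yields the basis. For the extension, I would perturb $\bm\alpha$ slightly to obtain a family of non-alternating regions $R_\varepsilon$ with distinct angles converging to $R$. Theorem~\ref{main_teo} gives $\Span(\gamma_{R_\varepsilon})=\phi(M_{R_\varepsilon})$ of constant dimension $n$; by continuity of $\gamma_R$ and of the boundary correlation matrix in $\bm\alpha$, passing to $\varepsilon\to 0$ yields $\Span(\gamma_R)=\phi(M_R)$ for alternating $R$ as well.

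For linear independence, I order $J_1\cup\{1\}=\{p_1<\cdots<p_n\}$ by increasing index. Every $p_\ell\in J_1\cup\{1\}$ satisfies $p_\ell<\tau(p_\ell)$, so $\supp_\tau(p_\ell)=\{p_\ell,\ldots,\tau(p_\ell)\}$ contains no $p_i$ with $i<\ell$. Hence the matrix $A_{i,\ell}:=(u_R^{(p_\ell)})_{p_i}$ is lower-triangular. Its diagonal entry $\gamma_{p_\ell}^{(m_{p_\ell})}(\alpha_{p_\ell})$ is nonzero: by~\eqref{gamma}, the order of vanishing of $\gamma_{p_\ell}$ at $\alpha_{p_\ell}$ equals $m_{p_\ell}$, since the vanishing factors among $\mathsf{sn}(t-\alpha_j)$, $j\in J_{p_\ell}$, are exactly those with $v_j=v_{p_\ell}$, and $\Pi_{p_\ell}(\alpha_{p_\ell})\neq 0$ because $\mathsf{dn}$ has no real zeros. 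Thus $A$ is invertible and the $u_R^{(p)}$ are linearly independent.

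For membership $u_R^{(p)}\in\phi(M_R)$, note that $\Span(\gamma_R)$ is closed (being finite-dimensional), so every derivative $\gamma_R^{(k)}(\alpha_p)$ lies in it as a limit of divided differences of $\gamma_R$. In general $u_R^{(p)}$ is strictly the restriction of $\gamma_R^{(m_p)}(\alpha_p)$ to $\supp_\tau(p)$; coordinates outside this support are nonzero only at indices in some $\supp_\tau(p')$ with $\alpha_{p'}=\alpha_p$, i.e., within the angle-cluster $C_{\alpha_p}=\{p'\in J_1\cup\{1\}:\alpha_{p'}=\alpha_p\}$. Condition~\eqref{cond_2} forces the supports $\supp_\tau(p')$, $p'\in C_{\alpha_p}$, to be pairwise either disjoint or nested. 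The plan is then to work cluster by cluster: set up a triangular linear system expressing the $u_R^{(p')}$'s in terms of the derivatives $\gamma_R^{(k)}(\alpha_p)$ — all of which lie in $\Span(\gamma_R)$ — and invert it to place each $u_R^{(p)}$ in $\Span(\gamma_R)=\phi(M_R)$.

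The hardest part will be the membership step. In the disjoint-supports subcase, the multiplicities $m_{p'}$ may coincide across the cluster, so a single derivative $\gamma_R^{(m_p)}(\alpha_p)$ only yields the sum $\sum_{p'\in C_{\alpha_p}}u_R^{(p')}$; disentangling the individual vectors will require additional input, for instance derivatives taken at other cluster angles or a careful use of the global structure of $\phi(M_R)$. The main technical input is a combinatorial analysis of how the cluster, its supports $\supp_\tau(p')$, and the vanishing orders of the $\gamma_q$ at $\alpha_p$ interact.
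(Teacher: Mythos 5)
The decisive gap is the one you yourself flag: membership of $u_R^{(p)}$ in $\phi(M_R)$. Your observation that $\gamma_R^{(m_p)}(\alpha_p)\in\Span(\gamma_R)$ (closedness of finite-dimensional subspaces) is correct, but $u_R^{(p)}$ is the \emph{restriction} of that vector to $\supp_\tau(p)$, and your cluster-by-cluster ``triangular system'' does not actually produce the individual restricted vectors --- as you concede, when several members of a cluster have disjoint supports and equal multiplicities, the derivatives at $\alpha_p$ only give you sums of the $u_R^{(p')}$. This is exactly where the paper's proof does its real work, and it does it at the level of the approximating non-alternating regions $R_N$ rather than inside $\Span(\gamma_R)$: for each $p$ it splits $\{l\in J_p:\alpha_l=\alpha_p\}$ into $J'_p$ (indices inside $\supp_\tau(p)$) and $J''_p$ (outside), divides $\gamma_{R_N}$ by $\prod_{l''\in J''_p}\mathsf{sn}(t-\alpha^N_{l''})$, and applies the iterated divided-difference operator $\bar{\partial}_{(\alpha^N_{l'})_{l'\in J'_p}}$ evaluated at $\alpha^N_p$. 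Every coordinate $\gamma_q$ with $q\notin\supp_\tau(p)$ carries the factors $\mathsf{sn}(t-\alpha^N_p)$ and $\mathsf{sn}(t-\alpha^N_{l'})$ for all $l'\in J'_p$, so it vanishes at \emph{all} the interpolation points and the divided difference kills the off-support coordinates already at finite $N$; meanwhile the division by the $J''_p$-factors lowers the order of vanishing on-support so that, by the divided-difference-to-derivative limit and a Leibniz computation, the surviving coordinates converge to those of $\gamma_R^{(m_p)}(\alpha_p)$ up to a nonzero constant. Some mechanism of this kind --- interpolation at the perturbed angles, which are distinct and are common zeros of all off-support coordinates --- is the ``additional input'' your sketch is missing; without it the proof is incomplete at its central step.

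A secondary point: your opening claim that continuity upgrades Theorem~\ref{main_teo} to $\Span(\gamma_R)=\phi(M_R)$ for alternating $R$ is not justified. Passing to the limit gives $\gamma_R(t)=\lim_{\varepsilon\to0}\gamma_{R_\varepsilon}(t)\in\phi(M_R)$, hence only the inclusion $\Span(\gamma_R)\subseteq\phi(M_R)$; equality would require $\dim\Span(\gamma_R)=n$, and the possible collapse of this dimension for alternating regions is precisely why the proposition replaces evaluations of $\gamma_R$ by restricted derivatives. Since only the inclusion is used downstream, this does not by itself derail your argument, but it should not be stated as an extension of the theorem. Your linear-independence step, on the other hand, is correct and coincides with the paper's: every $p\in J_1\cup\{1\}$ satisfies $p<\tau(p)$, the minor indexed by $J_1\cup\{1\}$ is triangular, and the diagonal entries are nonzero because the order of vanishing of $\gamma_p$ at $\alpha_p$ is exactly $m_p$ (the factor $\Pi_p$ being nonvanishing on $\R$).
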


\begin{proof}
Let $(R_N=(\tau,\bm{\alpha}^N)_{N\geq 1}$ be a sequence of regions such that for
all $N\geq 1$, the angles $(\alpha^N_j)_{j\in [2n]}$ are all distinct (so that
in particular $R_N$ is non-alternating for all $N$) and $\alpha^N_j$ tends to
$\alpha_j$ when $N$ goes to infinity. The function $R\rightarrow M_R$ is
continuous in $\bm{\alpha}$ so $M_{R_N}\rightarrow M_R$ as $N\rightarrow\infty$.
Furthermore the doubling map $\phi$ is continuous so $\phi(M_R)$ consists of
vectors of $\R^{2n}$ that are limits of sequences of vectors $x^N$ of
$\phi(M_{R_N})=\Span(\gamma_{R_N})$. Therefore we should prove that the vectors
$\{u_R^{(j)}:j\in J_1\cup\{1\}\}$ are obtained as such limits and that they are
linearly independent. 

We first show that these vectors are linearly independent. Denote
$J_1\cup\{1\}:=\{j_1<j_2<\dotsc<j_n\}$. Let $U$ be the matrix with lines
$u_R^{j_1},u_R^{j_2},\dotsc,u_R^{j_n}$. Consider the $n\times n$ principal minor
indexed by the columns $J_1\cup\{1\}$. We claim that this minor is an
upper-triangular matrix with non zero diagonal coefficients. Indeed, since we
differentiate exactly the right number of times, we have
$(u_R^{j_l})_{j_l}=\gamma_{j_l}^{(m_{j_l})}(\alpha_{j_l})\neq 0$. For $1\leq
l<l'\leq n$, we have $l\notin \supp_{\tau}(l')$ thus $(u_R^{j_{l'}})_{j_l}=0$.

Now we show that for each $j\in[2n]$, $u_R^{j}$ is a limit of a sequence
$u^{(j,N)}\in\Span(\gamma_{R_N})$. In other words, we want to approximate the
derivatives of $\gamma_R$ by linear combinations of $\gamma_{R_N}$. To do that,
we introduce the \emph{discrete derivative operator}. For a smooth function $f$
and for $a\in\R$ define
\[\bar{\partial}_af(x) := \frac{f(x)-f(a)}{x-a} \]
For $(a_1,a_2,\dotsc,a_m)$ distinct real numbers we set
\[ \bar{\partial}_{(a_1,\dotsc,a_m)}f(x):=\bar{\partial}_{a_1}\circ\bar{\partial}_{a_2}\circ\dotsc\bar{\partial}_{a_m}. \]
We have the following property that can be proved by induction, using Taylor expansion:
\begin{equation} \lim_{a_1,\dotsc,a_m,a\rightarrow a_0}[(\bar{\partial}_{(a_1,\dotsc,a_m)}f(x))_{|x=a}]=\frac{1}{m!}f^{(m)}(a_0). 
\label{discrete_limit}
\end{equation}
Let $j\in [2n]$. Define
\[ J'_j=\{l\in J_j\cap \supp_{\tau}(j):\alpha_l=\alpha_j\} \text{ and } J''_j=\{l\in J_j\setminus\supp_{\tau}(j):\alpha_l=\alpha_j\}.\]
Write $J'_j:=\{l'_1,\dotsc,l'_{m'}\}$ and $J''_j:=\{l''_1,\dotsc,l''_{m''}\}$. We have $m_j=m'+m''$. Now define
\[ \gamma^{\flat}_{R_N}(t)=\frac{1}{\prod_{s=1}^{m''}\mathsf{sn}(t-\alpha^N_{l''_s})}\gamma_{R_N}(t).\]
Finally, let
\[ w^{(j,N)}:=\bar{\partial}_{(\alpha^N_{l'_1},\dotsc,\alpha^N_{l'_{m'}})}\gamma^{\flat}_{R_N}(\alpha^N_j).\]
These vectors are linear combinations of vectors of $\Span(\gamma_{R_N})$ and we
now show that they tend to $u^{(j)}_R$, up to a multiplicative constant, when
$N$ goes to infinity. First let us check that the coordinates that are not in
$\supp_{\tau}(j)$ are equal to $0$. Let $p\notin \supp_{\tau}(j)$. Then $j\in
J_p$ thus $\gamma_p(\alpha_j)=0$. Furthermore, for all $s\in [m']$, we have
$l'_s\in J'_j$  thus $\supp_{\tau}(l'_s)\subset\supp_{\tau}(j)$. Therefore
$p\notin \supp_{\tau}(l'_s)$ and $\gamma_p(\alpha_{l'_s})=0$. So for $p\notin
\supp_{\tau}(j)$, $w^{(j,N)}_p\rightarrow 0$ when $N$ goes to infinity. It
remains to show that the other coordinates converge to the expected result. We
know, by the property~\eqref{discrete_limit}, that $m'!w^{(j,N)}$ converges to
$(\gamma^{\flat}_R)^{(m')}(\alpha_j)$ when $N\rightarrow \infty$. Denote by
$g_N(t)$ the function $\prod_{s=1}^{m''}\mathsf{sn}(t-\alpha^N_{l''_s})$ and by
$g(t)$ the limit of $g_N(t)$ as $N\rightarrow \infty$. By Leibniz formula, we
have
\[ (\gamma_{R_N})^{(m_j)}(\alpha_j) = \sum_{s=0}^{m_j}\binom{m_j}{s}(\gamma^{\flat}_{R_N})^{(s)}(\alpha_j)g_N^{(m_j-s)}(\alpha_j). \]
Note that for $m_j-s<m''$ \emph{i.e.} $s>m'$, we have
$g_N^{(m_j-s)}(\alpha_j)\rightarrow 0$ as $N\rightarrow \infty$. And for $s<m'$
we have $(\gamma^{\flat}_{R_N})^{(s)}(\alpha_j)\rightarrow 0$ as $N\rightarrow
\infty$. Therefore
\[ \gamma_R^{(m_j)}(\alpha_j)=\binom{m_j}{m'}g^{(m'')}(\alpha_j)(\gamma^{\flat}_R)^{m'}(\alpha_j). \]
We have therefore proved that $u^{(j)}_R$ is a limit of vectors of $\Span(\gamma_{R_N})$ which finishes the proof.

\end{proof}

\section{Kramers-Wannier Duality}
\label{section_duality}

Let $\mathscr{T}$ be a traintrack arrangement. We have considered an Ising model
on $G^{\bullet}_{\mathscr{T}}$ with weights $(j_e)$ defined by
\eqref{Z_invariant_weights}. There is a dual Ising model defined on the dual
graph $G^{\circ}_{\mathscr{T}}$ with weights $(j^*_e)$ that satisfy the
Kramers-Wannier duality relation:
\[ \forall e\in E \quad \sinh(2j_e)\sinh(2j^*_{e^*})=1. \]
It was proved in~\cite[Section~4]{boutillier2019z} that the dual weights $j^*_{e^*}$ have the same expression than $j_e$ with the parameter $k$ replaced by the dual parameter $k^*$.
In our setting, an Ising model on the dual graph $G^{\circ}_{\mathbb{A}}$ can be
obtained by switching the black and white colors of the vertices of the diamond
graph and relabeling the vectors $(v_1,\dotsc,v_{2n})$ into
$(v_2,\dotsc,v_{2n},v_1)$. Both ways give in fact the same model on the dual
graph. More precisely, let $S:\R^{2n}\rightarrow \R^{2n}$ be the \emph{cyclic
shift operator} defined by $S(x_1,x_2\dotsc,x_{2n})=((-1)^{n-1}x_{2n}, x_1,x_2,
\dotsc, x_{2n-1})$. The following theorem is proved
in~\cite[Theorem~3.4]{galashin2020ising}.

\begin{teo}[\cite{galashin2020ising}]
Let $R=(\tau,\bm{\alpha})$ be a region and let $\mathscr{T}$ be a traintrack
arrangement. Let $(G^{\bullet}_{\mathscr{T}},j)$ be an Ising model and
$(G^{\circ}_{\mathscr{T}},j^*)$ be the Kramers-Wannier dual Ising model. Let $M$
and $M^*$ be the respective correlation matrices. Then
\[ \phi(M)\cdot S = \phi(M^*). \]
\end{teo}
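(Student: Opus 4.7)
The plan is to leverage Theorem~\ref{main_teo} to turn the duality statement about correlation matrices into a statement about spans of the curves $\gamma_R$, and then verify this by a direct computation using the duality formulas between primal and dual Jacobi elliptic functions.

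First, I would apply Theorem~\ref{main_teo} to express $\phi(M) = \Span(\gamma_R)$, where $\gamma_R$ is built from the primal rescaled functions $\mathsf{sn}, \mathsf{cn}, \mathsf{dn}$ associated to the elliptic modulus $k$. For the dual model, there are two equivalent routes, both pointed out in the preamble to the theorem. One is to use the result of \cite{boutillier2019z} identifying the Kramers-Wannier dual weights with $Z$-invariant weights for the dual modulus $k^*$, together with Theorem~\ref{main_teo}, which gives $\phi(M^*) = \Span(\gamma_R^*)$, where $\gamma_R^*$ is defined exactly as $\gamma_R$ but with primal elliptic functions replaced by their duals (this is precisely what Remark~\ref{remark_formula} licenses). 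The other is to interpret the dual model combinatorially as switching black and white faces in the diamond graph, which amounts to relabelling $(v_1,\dotsc,v_{2n})$ as $(v_2,\dotsc,v_{2n},v_1)$, i.e.\ passing to a cyclically shifted region~$R'$; Theorem~\ref{main_teo} then gives $\phi(M^*) = \Span(\gamma_{R'})$ using the primal functions. The equality of these two descriptions of $\phi(M^*)$ is a consistency check built into the formalism.

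The heart of the argument is to show $\Span(\gamma_R)\cdot S = \Span(\gamma_{R'})$, where $R'$ is the shifted region. Using the extended notation $\gamma_p$ for $p\in\Z$ and the identity $\gamma_{p+2n}(t) = (-1)^{n-1}\gamma_p(t)$, the definition of the shift $S(x_1,\dotsc,x_{2n}) = ((-1)^{n-1}x_{2n},x_1,\dotsc,x_{2n-1})$ is tailored so that $(\gamma_R(t)\cdot S)_p = \gamma_{p-1}(t)$ for all $p\in[2n]$. On the other hand, since the angles of $R'$ are $\alpha'_p = \alpha_{p+1}$ (with $\alpha_{2n+1} = \alpha_1+\pi$) and $\tau'$ is the conjugate of $\tau$ by the shift, the $p$-th coordinate of $\gamma_{R'}(t)$, read off from \eqref{tilde_gamma}, matches $\gamma_{p-1}(t)$ after reindexing the products $\prod_{j\in\tilde J_p}\mathsf{sn}(t-\tilde\alpha_j)$ and $\Pi_p(t)$. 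The single subtlety is that $\Pi_p$ starts its product at index $1$ rather than at $p - 2\lfloor p/2\rfloor + 1$, so shifting the region changes which $\mathsf{dn}$ factor is at the boundary; this is absorbed because the parities of the indices swap, and the missing/extra $\mathsf{dn}(t - \tilde\alpha_j)/\sqrt{k'}$ factor is a common scalar multiple that does not affect the span.

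The main obstacle is the sign and index bookkeeping, especially the factor $(-1)^{n-1}$ in $S$ and its interaction with the extension $\gamma_{p+2n}(t) = (-1)^{n-1}\gamma_p(t)$. Conceptually the identity is immediate once one writes it out, but verifying it carefully requires tracking how the parity-dependent definitions (of $\Pi_p$, of $B_j^{\bm{\alpha}}$, and of the coloring of boundary faces) transform under a shift by one unit, together with the duality relations \eqref{dual_sn}, \eqref{dual_cn}, \eqref{dual_dn} which reconcile the dual-functions description of $\phi(M^*)$ with the shifted-region description. Once the identity $\gamma_R(t)\cdot S = \gamma_{R'}(t)$ is checked pointwise, taking spans and invoking Theorem~\ref{main_teo} on both sides yields $\phi(M)\cdot S = \phi(M^*)$.
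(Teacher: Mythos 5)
There is a genuine gap, and it sits exactly where you locate ``the heart of the argument.'' You propose to realize $\phi(M^*)$ as $\Span(\gamma_{R'})$ for the cyclically relabelled region $R'$ \emph{using the primal functions} $\mathsf{sn},\mathsf{dn}$, and then to match $\gamma_R(t)\cdot S$ with $\gamma_{R'}(t)$ by reindexing, dismissing the mismatch in the $\Pi_p$-products as ``a common scalar multiple.'' Both halves of this fail away from criticality. First, the Kramers--Wannier dual of the $k$-$Z$-invariant model on $G^{\bullet}_{\mathscr{T}}$ is the $k^*$-$Z$-invariant model on $G^{\circ}_{\mathscr{T}}$ (this is the content of \cite[Section~4]{boutillier2019z} quoted just before the theorem); it is \emph{not} the $k$-$Z$-invariant model on the relabelled region. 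Indeed, relabelling replaces $\theta_e$ by $\frac{\pi}{2}-\theta_e$, and by Table~\ref{table_formula} one has $\frac{\mathsf{sn}(\frac{\pi}{2}-\theta_e)}{\mathsf{cn}(\frac{\pi}{2}-\theta_e)}=\frac{\mathsf{cn}(\theta_e)}{k'\,\mathsf{sn}(\theta_e)}$, so the product of the two hyperbolic sines is $\frac{1}{k'}$ rather than $1$ unless $k=0$. This is precisely why Remark~\ref{remark_formula} insists that the recoloured/relabelled description must use the dual functions $\mathsf{sn}^*,\mathsf{dn}^*$: your two ``equivalent routes'' to $\phi(M^*)$ are not equivalent, and only the first (via $\gamma^*_R$) is correct. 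Second, even as a statement about the primal curve alone, the coordinates of $\gamma_{R'}(t)$ differ from the shifted coordinates of $\gamma_R(t)$ by ratios of $\mathsf{dn}$-factors such as $\mathsf{dn}(t-\alpha_{p+1})/\mathsf{dn}(t-\alpha_1)$, which depend on $p$ (and on the parity of $p$); they are not a common scalar.

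What actually has to be proved --- and what the paper proves in the lemma immediately following this theorem --- is the pointwise identity $\gamma_p(t)=c(t)\,\gamma^*_{p+1}(t)$, comparing the \emph{primal} curve with the shifted \emph{dual} curve. Showing that this ratio is independent of $p$ is not bookkeeping: it uses the combinatorial relation $J_{p+1}=(J_p\setminus\{p+1\})\cup\{\tau(p)\}$ together with the quarter-period identity $\mathsf{dn}(t-\alpha_p)\mathsf{dn}(t-\alpha_{\tau(p)})=k'$ (a consequence of $\alpha_{\tau(p)}=\alpha_p+\frac{\pi}{2}$ and Table~\ref{table_formula}) and the duality relations \eqref{dual_sn}--\eqref{dual_dn}; these cancellations are exactly what you defer to ``sign and index bookkeeping.'' Finally, note that the statement concerns an \emph{arbitrary} Ising model and its Kramers--Wannier dual and is imported from \cite{galashin2020ising}; any route through Theorem~\ref{main_teo} (yours, like the paper's alternative argument) can only reprove it for the $Z$-invariant coupling constants, a restriction you should make explicit.
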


For all $p\in [2n]$, define
\[ \gamma^*_p(t)=\prod_{j=1}^{2\lfloor\frac{p}{2}\rfloor} \frac{\mathsf{dn^*}(t-\tilde{\alpha}_j)}{\sqrt{(k^*)'}}\prod_{j\in \tilde{J}_p}\mathsf{sn^*}(t-\tilde{\alpha}_j). \]
As noticed in Remark~\ref{remark_formula}, computations work in the same way
with the dual parameter~$k^*$, in particular the proof of Theorem~\ref{main_teo}
is well-adapted and we have $\phi(M^*)=\Span(\gamma^*_R)$. Note that, thanks to
the tilded notation, we get the equality
\[ \begin{pmatrix}
\gamma_1(t) & \gamma_2(t) & \dotsc & \gamma_{2n}(t)
\end{pmatrix}
\cdot S = \begin{pmatrix}
\gamma_0(t) & \gamma_1(t) & \dotsc & \gamma_{2n-1}(t)
\end{pmatrix}. \]
Let us check that our formula for boundary correlations is coherent with the
above theorem, and in fact provide with a constructive alternative proof for
these particular coupling constants. In fact, we prove something stronger:
\begin{lem}
  For all $t\in\R$, there exists a non-zero real number $c(t)$ such that
  \[\forall p\in\Z,\quad\gamma_p(t)=c(t)\gamma_{p+1}^*(t).\]
  In particular, we recover that $\Span(\gamma_R)\cdot S = \Span(\gamma^*_R)$.
\end{lem}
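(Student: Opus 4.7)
My plan is to show that the ratio $\gamma_p(t)/\gamma^*_{p+1}(t)$ does not depend on $p\in\Z$, and to take this common value as $c(t)$. To avoid dividing by zero at exceptional $t$, I will really prove the cross-multiplied identities
\[ \gamma_p(t)\,\gamma^*_{p+2}(t)=\gamma_{p+1}(t)\,\gamma^*_{p+1}(t)\quad \text{for every } p\in\Z, \]
and then, at any $t$ where some $\gamma^*_{p_0+1}(t)\neq 0$, set $c(t):=\gamma_{p_0}(t)/\gamma^*_{p_0+1}(t)$. Non-vanishing of $c(t)$ at generic $t$ is clear since the defining products have only isolated zeros. The corollary $\Span(\gamma_R)\cdot S=\Span(\gamma^*_R)$ then follows from the equality of coordinates together with the displayed identity relating $S$ and the index shift.

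To verify the step from $p$ to $p+1$, I will compute the two ratios $r_p(t):=\gamma_{p+1}(t)/\gamma_p(t)$ and $r^*_{p+1}(t):=\gamma^*_{p+2}(t)/\gamma^*_{p+1}(t)$ directly from the product formula~\eqref{tilde_gamma}. On the primal side the $\Pi$-ratio is trivial when $p$ is even and equals $\mathsf{dn}(t-\tilde\alpha_p)\mathsf{dn}(t-\tilde\alpha_{p+1})/k'$ when $p$ is odd. The symmetric difference $\tilde J_{p+1}\triangle \tilde J_p$ is controlled by the move from $p$ to $p+1$: it either adds $\tilde\tau(p)$ (when $\tilde\tau(p)>p+1$) or removes $p+1$ (when $p+1\in\tilde J_p$), or both, contributing one factor of $\mathsf{sn}$ or its inverse to $r_p(t)$. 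The dual ratio $r^*_{p+1}(t)$ is computed in exactly the same way but with the parities swapped (since we are passing from index $p+1$ to $p+2$, the $\Pi^*$-ratio is non-trivial when $p$ is even) and with $\mathsf{sn}^*,\mathsf{dn}^*$ in place of $\mathsf{sn},\mathsf{dn}$.

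Matching $r_p(t)$ with $r^*_{p+1}(t)$ is then an exercise in the duality formulas~\eqref{dual_sn},~\eqref{dual_cn},~\eqref{dual_dn} combined with the shift identity $\tilde\alpha_{\tilde\tau(p)}=\tilde\alpha_p+\pi/2$ from Condition~\eqref{cond_1} and the translation formulas from Table~\ref{table_formula}. Concretely, a primal factor $\mathsf{sn}(t-\tilde\alpha_{\tilde\tau(p)})=\mathsf{sn}(t-\tilde\alpha_p-\pi/2)=-\mathsf{cd}(t-\tilde\alpha_p)$ converts, after multiplying/dividing by the relevant $\mathsf{dn}$ factors produced by the parity swap, into a $\mathsf{sn}^*$ factor at $t-\tilde\alpha_p$, which is precisely what appears in $r^*_{p+1}(t)$. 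The normalizations $\sqrt{k'}$ and $\sqrt{(k^*)'}$ are absorbed by the same duality identities (this is essentially what is performed, in its ``odd $j$'' form, already inside the proof of Proposition~\ref{main_lemma}). Once $r_p(t)=r^*_{p+1}(t)$ is established in all cases, the $p$-independence of the ratio, hence the lemma, follows by induction on $|p|$, with the edge case $p=2n$ handled by the sign rule $\mathsf{sn}(u+\pi)=-\mathsf{sn}(u)$ and the already noted relation $\gamma_{p+2n}=(-1)^{n-1}\gamma_p$.

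The main obstacle I expect is the case analysis: four subcases (parity of $p$ crossed with the two possible updates of $\tilde J_p$), each requiring one to route the rearrangement through the correct identities of Table~\ref{table_formula} and to keep track of the extra $\mathsf{dn}$ factor appearing on the primal side precisely when it is absent on the dual side, and vice versa. Conceptually none of this is harder than what is already done in Proposition~\ref{main_lemma}, but the bookkeeping of the $\sqrt{k'}$ versus $\sqrt{(k^*)'}$ renormalizations, together with the sign contributions from the $\pi/2$-shift, is where mistakes are most likely.
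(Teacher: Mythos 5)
Your overall strategy --- proving that the ratio $\gamma_p(t)/\gamma^*_{p+1}(t)$ is independent of $p$ by a step-by-step comparison using the duality formulas~\eqref{dual_sn}--\eqref{dual_dn}, Condition~\eqref{cond_1} and Table~\ref{table_formula} --- is the same telescoping idea as the paper's proof, and phrasing it through the cross-multiplied identity $\gamma_p\gamma^*_{p+2}=\gamma_{p+1}\gamma^*_{p+1}$ is a harmless variant. The gap is that the one step carrying all the content is not carried out, and the sketch of how it would go does not match the objects involved. The equality $r_p(t)=r^*_{p+1}(t)$ \emph{is} the lemma, restated; and your proposed factor-by-factor verification rests on a misidentification of what $r^*_{p+1}$ contains. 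Since $r^*_{p+1}=\gamma^*_{p+2}/\gamma^*_{p+1}$ is governed by the transition of $\tilde J$ at index $p+1$, it is built from $\mathsf{sn}^*$ factors at $t-\tilde\alpha_{\tilde\tau(p+1)}$ and $t-\tilde\alpha_{p+2}$, together with a $\Pi^*$-ratio involving $\mathsf{dn}^*$ at $t-\tilde\alpha_{p+1}$ and $t-\tilde\alpha_{p+2}$; there is no factor ``at $t-\tilde\alpha_p$'' on the dual side for your converted $\mathsf{sn}(t-\tilde\alpha_{\tilde\tau(p)})=-\mathsf{cd}(t-\tilde\alpha_p)$ to be matched with, so the announced ``exercise in the duality formulas'' cannot be performed in the form you describe. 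The paper arranges the cancellation differently: it fixes one ratio $\gamma_p/\gamma^*_{p+1}$, converts every starred factor into an unstarred one (using $(k^*)'=1/k'$ and~\eqref{dual_sn},~\eqref{dual_dn}), and then compares the resulting expression with the one for $p+1$ using $J_{p+1}=(J_p\setminus\{p+1\})\cup\{\tau(p)\}$ together with the quarter-period identity $\mathsf{dn}(t-\alpha_p)\,\mathsf{dn}(t-\alpha_{\tau(p)})=k'$; the $\mathsf{sn}$ factors are disposed of inside each ratio, not matched across the two step ratios at different indices.

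Two further points. Your description of $\tilde J_{p+1}\triangle\tilde J_p$ is off: when $\tau(p)\neq p+1$ one always \emph{both} adds $\tilde\tau(p)$ and removes $p+1$, and when $\tau(p)=p+1$ \emph{neither} happens; this last case, which the paper treats separately via $\mathsf{dn}(t-\alpha_p)\,\mathsf{dn}(t-\alpha_{p+1})=k'$, is missing from your case list, so even the combinatorial bookkeeping you plan to rely on is incomplete. Finally, the statement asks for a non-zero $c(t)$ for \emph{every} $t\in\R$, whereas you only produce it at generic $t$; you still need to rule out the simultaneous vanishing of $\gamma_p(t)$ and $\gamma^*_{p+1}(t)$ for all $p$ at exceptional $t$ (or argue by continuity/meromorphy), which is easy but should be said.
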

\begin{proof}
Let $p\in \Z$. Suppose $\tau(p)\neq p+1$. Then
$J_{p+1}=(J_p\setminus\{p+1\})\cup\{\tau(p)\}$. If $p$ is even then
\begin{align*}
\frac{\gamma_p(t)}{\gamma^*_{p+1}(t)} & = (k')^{p+n-1}\prod_{j=1}^p\frac{1}{\mathsf{dn}(t-\alpha_j)}\prod_{j=0}^{p-1}\frac{1}{\mathsf{dn}(t-\alpha_j)}\prod_{j\in J_p}\frac{1}{\mathsf{dn}(t-\alpha_j)},
\end{align*}
and
\begin{align*}
\frac{\gamma_{p+1}(t)}{\gamma^*_{p+2}(t)} & = (k')^{p+1+n-1}\prod_{j=1}^{p}\frac{1}{\mathsf{dn}(t-\alpha_j)}\prod_{j=0}^{p+1}\frac{1}{\mathsf{dn}(t-\alpha_j)}\prod_{j\in J_{p+1}}\frac{1}{\mathsf{dn}(t-\alpha_j)} \\
	& = (k')^{p+1+n-1}\prod_{j=1}^{p}\frac{1}{\mathsf{dn}(t-\alpha_j)}\prod_{j=0}^{p+1}\frac{1}{\mathsf{dn}(t-\alpha_j)}\prod_{j\in J_{p}}\frac{1}{\mathsf{dn}(t-\alpha_j)}\frac{\mathsf{dn}(t-\alpha_{p+1})}{\mathsf{dn}(t-\alpha_{\tau(p)})} \\
	& = \frac{\gamma_p(t)}{\gamma^*_{p+1}(t)} \quad \text{ since } \mathsf{dn}(t-\alpha_p)\mathsf{dn}(t-\alpha_{\tau(p)})=k'.
\end{align*}
Similarly when $p$ is odd, we have $\frac{\gamma_p(t)}{\gamma^*_{p+1}(t)}=\frac{\gamma_{p+1}(t)}{\gamma^*_{p+2}(t)}$. We also get the result when $\tau(p)=p+1$ using the fact that $\mathsf{dn}(t-\alpha_p)\mathsf{dn}(t-\alpha_{p+1})=k'$ instead of the previous relation between $J_p$ and $J_{p+1}$. It implies that there exists a constant $c\in \R^{*}$ such that for all $p\in \Z$ we have $\frac{\gamma_p(t)}{\gamma^*_{p+1}(t)}=c$.
\end{proof}

\begin{rem}
When the region is a regular $2n$-gon, the constant $c$ is equal to the product
$\prod_{j=n+1}^{2n-1}\mathsf{dn}(t-\alpha_j)$ and it is immediately clear that
it is independent of $p$.
\end{rem}

\section{Near critical case}

We now explain the computations that one should make in order to study the near
critical case, that is to say the case when $k$ tends to $0$.  Let $p \in [2n]$
and let $t \in \R$. We do a Taylor expansion of $\gamma_p(t,k)$, which is
defined as $\gamma_p(t)$ but we underline the dependence in $k$. First we get,
using Equations~\eqref{eq:taylor_sn_rescaled} and~\eqref{eq:taylor_dn_rescaled},
\begin{equation*}
\mathsf{sn}(t-\alpha_j)  =\sin(t-\alpha_j)(1+\frac{k^2}{4}\cos^2(t-\alpha_j))+O(k^4))
\end{equation*}
and 
\begin{equation*}
\mathsf{dn}(t-\alpha_j)= 1-\frac{1}{2}k^2\sin^2(t-\alpha_j)+O(k^4).
\end{equation*}
Finally, using the fact that $k'=\sqrt{1-k^2}$ we get
\[ \frac{1}{\sqrt{k'}}=\frac{1}{(1-k^2)^{\frac{1}{4}}}=1+\frac{k^2}{4}+O(k^4).\]
Combining all the above expansions and expanding the product, we have the
following expansion for $\gamma_p(t,k)$:
\begin{multline*}
\gamma_p(t,k)  = \gamma_p(t,0) \\+\frac{k^2}{4}\prod_{j\in J_p}\sin(t-\alpha_j)\left[\sum_{j\in J_p}\cos^2(t-\alpha_j)+2\sum_{j=1}^{2\lfloor\frac{p}{2}\rfloor}\cos(2(t-\alpha_j))\right]+O(k^4).
\end{multline*}

Let $R$ be a region $R=(\tau,\bm{\alpha})$ where the angles $(\alpha_i)_{i\in
[2n]}$ are all distinct. Recall that, by Proposition~\ref{prop_K_n}, we get the
spin boundary correlations by computing $(\Gamma(k)K_n)^{-1}\Gamma(k)$, where
$\Gamma(k)$ is the matrix with rows
$\gamma_R(\alpha_{j_1}),\dotsc,\gamma_R(\alpha_{j_n})$ with
$J_1\cup\{1\}:=\{j_1,\dotsc,j_n\}$ and $k$ is the elliptic parameter. In the
critical case, an explicit expression for the coefficients of
$(\Gamma(0)K_n)^{-1}\Gamma(0)$ is known,
see~\cite[Theorem~1.1]{galashin2022formula}. In the general $Z$-invariant case, we
have
\begin{align*}
(\Gamma(k)K_n)^{-1}\Gamma(k) = &\left[\bigl(\Gamma(0)+(\Gamma(k)-\Gamma(0))\bigr)K_n\right]^{-1}\Gamma(k) \\
	 = &\Bigl[(\Gamma(0)K_n)^{-1}-(\Gamma(k)-\Gamma(0))K_n \\
	 & + O(\norme{\Gamma(k)-\Gamma(0)}^2)\Bigr](\Gamma(0)+(\Gamma(k)-\Gamma(0)) \\
	 =& (\Gamma(0)K_n)^{-1}\Gamma(0)+(\Gamma(0)K_n)^{-1}(\Gamma(k)-\Gamma(0)) \\
	 & -(\Gamma(k)-\Gamma(0))K_n\Gamma(0)+O(\norme{\Gamma(k)-\Gamma(0)}^2).
\end{align*}
Notice that we have an explicit expression for every term in this expansion
except $(\Gamma(0)K_n)^{-1}$. But note that for $k=0$ we have
\begin{align*}
\gamma_p(t,0) = & \prod_{j\in J_p}\sin(t-\alpha_j) \\
	= & \prod_{j\in J_p}\bigl(\sin(t)\cos(\alpha_j)-\cos(t)\sin(\alpha_j)\bigr) \\
	= & \sum_{l=1}^{n}(-\cos(t))^{l-1}\sin(t)^{n-l}\sum_{\substack{H\subset J_p \\ |H|=l+1}}\prod_{j\in H}\sin(\alpha_j)\prod_{j\in J_p\setminus H}\cos(\alpha_j).
\end{align*}

Therefore $\Gamma(0)=V\cdot F$ where $V$ is an invertible Vandermonde-like
matrix with entries $((-\cos(t_{j_i}))^{l}\sin(t_{j_i})^{n-l})_{1\leq i,l\leq n}$ and $F$ is the $n\times 2n$ matrix defined by, for all $1\leq l\leq n$ and for all $1\leq p\leq 2n$, 
\[ F_{l,p}:=\sum_{\substack{H\subset J_p \\ |H|=l+1}}\prod_{j\in H}\sin(\alpha_j)\prod_{j\in J_p\setminus H}\cos(\alpha_j). \]

In~\cite{galashin2022formula}, it is proven that $F=D\cdot B$ where $D$ is a
diagonal matrix and $B$ is a $n\times 2n$ matrix with entries
$B_{lj}=z_l^{j-1}$ with $z_l=e^{\frac{\pi i}{2n}(2l-n-1)}$. The inverse of
$BK_n$ can then be computed explicitly, see again~\cite{galashin2022formula} for
details. Therefore $(\Gamma(0)K_n)^{-1}$ also has an explicit expression but
considering the complexity of this expression, we did not manage to get
information from it.

\appendix

\section{Useful identities involving elliptic functions}

We state here facts about Jacobi elliptic functions that we use in this article
(without proofs). More details can be found in~\cite{abramowitz1965handbook} and
\cite{lawden}.

Let $k\in \C$ such that $k^2\in (-\infty, 1)$. Define
$K=K(k):=\int_{0}^{\frac{\pi}{2}}\frac{1}{\sqrt{1-k^2\sin^2(x)}}dx$, it is
called the \emph{complete elliptic integral of the first kind}. Let
$k':=\sqrt{1-k^2}$, and define thedual elliptic parameter $k^*$ by
$k^*:=i\frac{k}{k'}$.

We recall our rescaling convention for elliptic functions: if $p$ and $q$ are
distinct letters among $c,s,d,n$, we set for all $u\in\C$:
\begin{equation*}
  \mathsf{pq}(u)=\mathrm{pq}(\frac{2K(k) u}{\pi}|k),\quad
  \mathsf{pq}^*(u)=\mathrm{pq}(\frac{2K(k^*) u}{\pi}|k^*).
\end{equation*}

\paragraph{Change of arguments}

Jacobi's elliptic functions satisfy addition formula by quarter-periods and
half-periods (see~\cite[Table~16.8]{abramowitz1965handbook} for the unrescaled
version), among which:

\begin{table}[ht!]
\centering
\begin{tabular}{cccc}
\toprule
$u$ & $-u$ & $u\pm \frac{\pi}{2}$ & $u + \pi$ \\ \midrule
$\mathsf{sn}$ & $-\mathsf{sn}$ &$\pm         \mathsf{cd}$ & $-\mathsf{sn}$ \\ \midrule
$\mathsf{cn}$ & $ \mathsf{cn}$ & $\mp k'     \mathsf{sd}$ & $-\mathsf{cn}$ \\ \midrule
$\mathsf{dn}$ & $ \mathsf{dn}$ & $k'\frac{1}{\mathsf{dn}}$ & $\mathsf{dn}$ \\ \midrule
$\mathsf{cd}$ & $ \mathsf{cd}$ &$\mp         \mathsf{sn}$ & $-\mathsf{cd}$ \\ \bottomrule
\end{tabular}
\caption{Formulas for quarter-periods and half-periods}
\label{table_formula}
\end{table}

Some values of these functions are known explicitly:
\begin{equation} \mathsf{sn}(\frac{\pi}{4})=\frac{1}{\sqrt{1+k'}}
  \quad\text{and}\quad \mathsf{sn}(\frac{\pi}{2})=1 ,
\label{value_sn}
\end{equation}
\begin{equation}
\mathsf{cn}(\frac{\pi}{4})=\frac{\sqrt{k'}}{\sqrt{1+k'}},
\label{value_cn}
\end{equation}
\begin{equation}
\mathsf{dn}(\frac{\pi}{4})=\frac{1}{\sqrt{k'}}.
\label{value_dn}
\end{equation}

\paragraph{Addition formulas}
Some versions of addition formulas used in the proof of the main lemma are

\begin{equation}
  \forall u,v \in \C \quad
  \mathsf{dn}(u)\mathsf{sn}(u+v) =
  \mathsf{cn}(u)\mathsf{sn}(v) + \mathsf{sn}(u)\mathsf{cn}(v)\mathsf{dn}(u+v),
\label{sn_formula}
\end{equation}

\begin{equation}
\forall u,v\in\C \quad \mathsf{cn}(u+v)=
\mathsf{cn}(u)\mathsf{cn}(v) - \mathsf{sn}(u)\mathsf{sn}(v)\mathsf{dn}(u+v).
\label{cn_formula}
\end{equation}
They are consequences of more classical addition formulas
(see~\cite[16.17.1--16.17.3]{abramowitz1965handbook}),
and are listed in their unrescaled version as exercises in~\cite[Chapter~2,
Exercise~32]{lawden}.

\paragraph{Dual formulas}
Some formulas that link elliptic functions and dual
elliptic functions are:

\begin{equation}
\forall u \in \C \quad k'\mathsf{sd}(u)=\mathsf{sn}^*(u),
\label{dual_sn}
\end{equation}

\begin{equation}
\forall u \in \C \quad \mathsf{cd}(u)=\mathsf{cn}^*(u),
\label{dual_cn}
\end{equation}

\begin{equation}
\forall u\in\C \quad \frac{1}{\mathsf{dn}(u)}=\mathsf{dn}^*(u).
\label{dual_dn}
\end{equation}
See~\cite[Section~16.10]{abramowitz1965handbook}.

\paragraph{Taylor expansion when $k\rightarrow 0$}

\begin{equation}
K(k) = \frac{\pi}{2}+\frac{\pi}{8}k^2 + O(k^4),
\label{exp_K}
\end{equation}

\begin{equation}
\forall u\in\C \quad \sn(u)=\sin(u)-\frac{1}{4}k^2(u-\sin(u)\cos(u))\cos(u)+O(k^4),
\label{exp_sn}
\end{equation}

\begin{equation}
\forall u \in \C \quad \dn(u)=1-\frac{1}{2}k^2\sin^2(u)+O(k^4).
\label{exp_dn}
\end{equation}
See~\cite[Section~16.13]{abramowitz1965handbook}.
From by~\eqref{exp_K}, we have
\begin{equation*}
  \sin(\frac{2K(k)u}{\pi})=\sin(u)+\frac{k^2}{4}u\cos(u)+O(k^4),
  \cos(\frac{2K(k)u}{\pi})=\cos(u)-\frac{k^2}{4}u\sin(u)+O(k^4).
\end{equation*}
Injecting these expansions in
the rescaled functions $\mathsf{sn}$ and $\mathsf{dn}$,
one gets:

\begin{align}
\mathsf{sn}(u)  = &\sn(\frac{2K(k)}{\pi}u|k) \nonumber\\
                = &
  \sin(\frac{2K(k)u}{\pi})-\frac{k^2}{4}\left(\frac{2K(k)u}{\pi}
	 -
       \sin\bigl(\frac{2K(k)u}{\pi}\bigr)\cos\bigl(\frac{2K(k)u}{\pi}\bigr)\right)\cos\bigl(\frac{2K(k)u}{\pi}\bigr)+O(k^4)
       \nonumber\\
	= & \sin(u)(1+\frac{k^2}{4}\cos^2(u))+O(k^4))
        \label{eq:taylor_sn_rescaled}
\end{align}
and 
\begin{align}
\mathsf{dn}(t-\alpha_j)&= 1-\frac{1}{2}k^2\sin^2(\frac{2K(k)}{\pi}(t-\alpha_j)) + O(k^4)
\nonumber
\\
& = 1-\frac{1}{2}k^2\sin^2(t-\alpha_j)+O(k^4).
        \label{eq:taylor_dn_rescaled}
\end{align}

\bibliographystyle{alpha}
\bibliography{references}

\end{document}